\newcommand{\mf}{\mathfrak}
\newcommand{\mc}{\mathcal}
\newcommand{\R}{\mathbf R}
\newcommand{\C}{\mathbf C}
\newcommand{\Q}{\mathbf Q}
\newcommand{\Z}{\mathbf Z}
\newcommand{\F}{\mathbf F}
\newcommand{\Fix}{\textnormal{Fix}}
\numberwithin{equation}{section}
\theoremstyle{plain}
\newtheorem{theorem}{Theorem}[section]
\newtheorem{lemma}[theorem]{Lemma}
\theoremstyle{definition}
\newtheorem{remark}[theorem]{Remark}
\begin{document}

\title[Bivariate polynomial mappings]{Bivariate polynomial mappings associated 
with simple complex Lie algebras}

\author{\"{O}mer K\"{u}\c{c}\"{u}ksakall{\i}}
\address{Middle East Technical University, Mathematics Department, 06800 Ankara,
Turkey.}
\email{komer@metu.edu.tr}

\date{\today}

\begin{abstract}
There are three families of bivariate polynomial maps associated with the 
rank-$2$ simple complex Lie algebras $A_2, B_2 \cong C_2$ and $G_2$. It is 
known that the bivariate polynomial map associated with $A_2$ induces a 
permutation of $\F_q^2$ if and only if $\gcd(k,q^s-1)=1$ for $s=1, 2, 3$. 
In this paper, we give similar criteria for the other two families. As an 
application, a counterexample is given to a conjecture posed by Lidl and Wells 
about the generalized Schur's problem.
\end{abstract}

\subjclass[2010]{11T06}

\keywords{Chebyshev polynomial; Dickson polynomial; Lie algebra; Weyl 
group; integrable mapping; exceptional polynomial; Schur's problem}

\maketitle


\section*{Introduction}
A polynomial map $f:\C^n\rightarrow\C^n$ of degree greater than one is called 
\textit{integrable} if there exists a polynomial map $g:\C^n\rightarrow\C^n$ of 
degree greater than one such that $f$ and $g$ commute, i.e. $f\circ g = g\circ 
f$, and the set of iterations of $f$ and $g$ are disjoint. Integrable maps play 
an important role in the theory of dynamical systems because they show an 
unusual degree of symmetry \cite{veselov-survey}. In the case $n=1$, a full 
description of integrable polynomials was given by Julia \cite{julia}, Fatou 
\cite{fatou} and Ritt \cite{ritt}. An integrable polynomial map 
$f:\C\rightarrow\C$ can be transformed by a linear change of variables to the 
form $f=z^n$ or $f=\pm T_n(z)$, where $T_n(z)=\cos(n \arccos z )$ is the 
Chebyshev polynomial.

There is a question in the theory of finite fields which has a similar answer. 
A polynomial $f(x)\in\Z[x]$ is called \textit{exceptional} if $f(x)$ induces a 
permutation of an infinite number of finite fields $\F_p$ where $p$ is prime. It 
is well known that a polynomial is exceptional if and only if it is a 
composition of linear polynomials, power maps and the Chebyshev polynomials. 
One side of this statement is relatively easier to prove since the $k$-th power 
map and the $k$-th Chebyshev polynomial induce a permutation of $\F_q$ if and 
only if $\gcd(k,q-1)=1$ and $\gcd(k,q^2-1)=1$, respectively \cite{lidlnied}. 
The other side of this classification is known as Schur's problem and  
proved by Fried \cite{fried}. Other proofs have been given by Turnwald 
\cite{turnwald} and M\"{u}ller \cite{muller}.

Let $\mathbf{P}^1(\C)$ be the projective space of dimension one. Apart from the 
power maps and Chebyshev polynomials, there is one more family of rational maps 
on $\mathbf{P}^1(\C)$ which satisfies the commuting relation $f \circ 
g = g \circ f$ \cite{ritt}. It is the family of Latt\`{e}s maps induced by 
isogenies of an elliptic curve $E$.  In our previous 
work \cite{kucuksakalli}, using the underlying elliptic curve group 
structure, we gave a criterion when a Latt\`{e}s map induces a permutation of 
$\mathbf{P}^1(\F_q)$. In the theory of dynamical systems, especially in its 
arithmetical aspects, the underlying algebraic structure plays an important 
role. For example, see Silverman \cite{sil-dyn}.

In this paper, we pay attention to the bivariate polynomial mappings associated 
with the rank-2 simple complex Lie algebras. We fix some notation first. Let 
$\mf{g}$ be a complex Lie algebra of rank $n$ and $\mf{h}$ its Cartan 
subalgebra, $\mf{h}^*$ its dual space, $\mc{L}$ a lattice of weights in 
$\mf{h}^*$ generated by the fundamental weights $\omega_1, \ldots, \omega_n$, 
and $L$ the dual lattice in $\mf{h}$. Veselov defines the mapping 
$\varPhi_\mf{g}:\mf{h}/L \rightarrow \C^n$, $\varPhi_\mf{g}(\varphi_1,\ldots, 
\varphi_n)$,
\[\varphi_k=\sum_{w\in W} e^{2\pi i w (\omega_k)}\] 
where $W$ is the Weyl group, acting on the space $\mf{h}^*$. Veselov shows that 
there exist a family of polynomial mappings associated with each simple complex 
Lie algebra with nice dynamical properties. Hofmann and Withers give the same 
result independently somewhat later.

\begin{theorem}[\cite{veselov},\cite{hoffwith}]\label{veselov}
With each simple complex Lie algebra of rank $n$, there is an associated an 
infinite series of integrable polynomial mappings $P_\mf{g}^k$, determined from 
the conditions
\[ \varPhi_\mf{g}(k\mathbf{x})=P_\mf{g}^k(\varPhi_\mf{g}(\mathbf{x})). \]
All coefficients of the polynomials defining $P_\mf{g}^k$ are integers.
\end{theorem}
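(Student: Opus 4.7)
The plan is to reduce the claim to a classical structure theorem for Weyl-invariants in the group ring of the weight lattice. The components of $\varPhi_\mf{g}$ are, by inspection, the Weyl orbit sums of the fundamental weights: writing
\[ m_\lambda(\mathbf{x}) := \sum_{w\in W} e^{2\pi i\, w(\lambda)(\mathbf{x})} \]
for any $\lambda\in\mc{L}$, one has $\varphi_j = m_{\omega_j}$. Thus each $\varphi_j$ lies in the ring $R := \Z[\mc{L}]^W$ of $W$-invariant elements in the integral group ring of the weight lattice, regarded as functions on $\mf{h}/L$, and in fact the orbit sums $\{m_\lambda\}_{\lambda\ \textnormal{dominant}}$ form a $\Z$-basis of $R$.

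The key algebraic input is the classical theorem (see Bourbaki, \emph{Groupes et alg\`{e}bres de Lie}, Ch.~VI) that $R$ is a polynomial algebra over $\Z$ freely generated by the fundamental orbit sums $m_{\omega_1},\ldots,m_{\omega_n}$. One proves this by induction on the dominance order on dominant weights: in the product $m_{\omega_{i_1}} m_{\omega_{i_2}} \cdots m_{\omega_{i_r}}$, the unique maximal dominant weight contributing is $\omega_{i_1}+\cdots+\omega_{i_r}$, occurring with coefficient $1$. This unitriangular structure expresses each $m_\lambda$ as a polynomial $Q_\lambda(m_{\omega_1},\ldots,m_{\omega_n})$ with integer coefficients.

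Specializing to $\lambda = k\omega_j$ and using that $\varphi_j(k\mathbf{x}) = \sum_{w\in W} e^{2\pi i\, w(k\omega_j)(\mathbf{x})} = m_{k\omega_j}(\mathbf{x})$, one obtains polynomials $P_{\mf{g},j}^k := Q_{k\omega_j} \in \Z[y_1,\ldots,y_n]$ satisfying
\[ \varphi_j(k\mathbf{x}) = P_{\mf{g},j}^k(\varphi_1(\mathbf{x}),\ldots,\varphi_n(\mathbf{x})). \]
Assembling the $n$ coordinates yields the desired polynomial map $P_\mf{g}^k$ with integer coefficients and $\varPhi_\mf{g}(k\mathbf{x}) = P_\mf{g}^k(\varPhi_\mf{g}(\mathbf{x}))$. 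The commutation relations $P_\mf{g}^k \circ P_\mf{g}^\ell = P_\mf{g}^{k\ell} = P_\mf{g}^\ell \circ P_\mf{g}^k$, required for integrability, then follow from the functional equation on the Zariski-dense image of $\varPhi_\mf{g}$ and extend to all of $\C^n$; choosing $k,\ell$ multiplicatively independent makes the iteration sets disjoint.

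The main obstacle is the integrality claim in the second step: one must know that $m_\lambda$ lies in the subring generated over $\Z$ (and not merely over $\Q$) by the fundamental orbit sums. This is where the structure of root systems and the action of $W$ on the weight lattice genuinely enter. Once this polynomial-ring theorem is granted, the remaining steps amount to substitution in the definition of $\varPhi_\mf{g}$.
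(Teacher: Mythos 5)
Your argument is essentially the paper's own: the paper attributes polynomiality to Chevalley's theorem that the $\varphi_j$ freely generate the ring of exponential $W$-invariants over $\Z$ (your Bourbaki structure theorem for $\Z[\mc{L}]^W$ with its unitriangular dominance-order argument is exactly that input), and derives commutativity directly from the defining functional equation. You simply supply more detail than the paper, which states the result as a citation to Veselov and Hoffman--Withers, so there is nothing to correct.
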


The commutativity of $P_\mf{g}^k$ follows from their definition:
\[ P_\mf{g}^k \circ P_\mf{g}^l = P_\mf{g}^{kl} = P_\mf{g}^l \circ P_\mf{g}^k.\]
The fact that they are polynomials follow from Chevalley's theorem which 
implies that the functions $\varphi_k$ freely generate an algebra of 
exponential invariants of a Weyl group $W$ \cite{veselov-survey}.

For $n=1$, there is a unique simple algebra $A_1$ of rank one. We have 
$\varphi_1=e^{2\pi i x}+e^{-2\pi i x} = 2\cos(2\pi x)$, and the polynomials 
$P_{A_1}^k$ are conjugate to the Chebysev polynomials. Indeed $P_{A_1}^k = 
D_k(x)$ is the family of Dickson polynomials. The Dickson polynomial $D_k$ 
satisfies the relation $D_k(y+1/y)=(y^k+1/y^k)$ for an indeterminate $y$ and
induces a permutation of the finite field $\F_q$ if and only if 
$\gcd(k,q^2-1)=1$ \cite{lidlnied}.

There are three distinct rank-2 simple complex Lie algebras, namely $A_2, B_2 
\cong C_2$ and $G_2$. The case $A_2$ was considered by Lidl and Wells as a part 
of a general result for $A_n$ \cite{lidlwells}. The polynomial $P_{A_2}^k$ 
induces a permutation of $\F_q^2$ if and only if $\gcd(k,q^s-1)=1$ for 
$s=1,2,3$. See Remark~\ref{rmrk-an}. We provide similar criteria for the 
families associated with the other two Lie algebras $B_2 \cong C_2$ and $G_2$, 
see Theorem~\ref{mainb2} and Theorem~\ref{maing2}, respectively.

The organization of this paper is as follows: In Section~\ref{sec:A}, we review 
the construction given by Lidl and Wells and its relation with the simple Lie 
Algebra $A_n$. In Sections~\ref{sec:B}~and~\ref{sec:G}, we investigate 
the bivariate polynomial maps associated with $B_2\cong C_2$ and $G_2$, 
respectively. We analyze the fixed points of these maps over complex numbers and 
obtain a one-to-one correspondence with $\F_q^2$ that is given by reduction 
modulo a certain prime ideal. We also explain why these examples of bivariate 
polynomial mappings disprove a conjecture posed by Lidl and Wells.

\section{The family associated with $A_n$}\label{sec:A}
Lidl and Wells give a generalization of Chebyshev maps to higher dimensions 
\cite{lidlwells}. Even though the underlying structure can be realized to be 
$A_n$, their construction is elementary. Their main tool is the fundamental 
theorem on symmetric polynomials. Lidl and Wells consider the polynomial 
equation 
\begin{equation}\label{eqn:A}
 t^{n+1}+x_1t^n+\ldots+x_nt+b=0
\end{equation}
with coefficients $x_1,\ldots,x_n, b \in\C$ and roots $t_1,\ldots,t_{n+1} \in 
\C$. The roots $t_i$ are not necessarily distinct. Note that $ b= (-1)^{n+1} 
t_1\cdots t_{n+1}$. Let $k$ be a positive integer. Consider the polynomial 
equation 
\[ t^{n+1}+\tilde x_1t^n+\ldots+\tilde x_nt+\tilde b=0 \]
with roots $t_1^k,\ldots,t_{n+1}^k$. Note that $\tilde b=(-1)^{(n+1)(k+1)}b$. 
It follows from the fundamental theorem on symmetric polynomials that there are 
integral polynomials $g_1^{(k)},\ldots,g_n^{(k)}$ such that $ g_i^{(k)} 
(x_1,\ldots,x_n,b) = \tilde{x}_i$ for all $i=1,\ldots,n$. Thus one can 
consider the polynomial vector
\[ g(n,k,b)=(g_1^{(k)}(x_1,\ldots,x_n,b),\ldots,g_n^{(k)}(x_1,\ldots,x_n,b)). \]
This polynomial vector can be regarded as a map from $\C^n$ to $\C^n$. Let 
$\F_q$ be a finite field of characteristic $p$. If $b$ is an integer, then each 
component of $g(n,k,b)$ is in $\Z[x_1,\ldots,x_n]$ and $g(n,k,b)$ induces a 
mapping $\bar{g}(n,k,b)$ from $\F_q^n$ to $\F_q^n$. The main result of Lidl and 
Wells is the following:

\begin{theorem}[\cite{lidlwells}]\label{lidlwellsmain}
 If $b\not\equiv 0\pmod{p}$, then the mapping $\bar{g}(n,k,b)$ is a permutation 
if and only if $\gcd(k,q^s-1)=1$ for all $s=1,2,\ldots,n+1$. Moreover 
$\bar{g}(n,k,0)$ is a permutation if and only if $\gcd(k,q^s-1)=1$ for all 
$s=1,2,\ldots,n$.
\end{theorem}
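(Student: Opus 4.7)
The plan is to reinterpret $\bar{g}(n,k,b)$ as a map on polynomials. Let $X_b\subset\F_q[t]$ be the set of monic polynomials of degree $n+1$ with constant term $b$; via Vieta's formulas $X_b\cong\F_q^n$, and under this identification $\bar{g}(n,k,b)$ becomes the map $\phi:X_b\to X_{\tilde b}$ sending a polynomial with root-multiset $\{\alpha_1,\ldots,\alpha_{n+1}\}\subset\bar\F_q$ to the one with root-multiset $\{\alpha_1^k,\ldots,\alpha_{n+1}^k\}$. Since $|X_b|=|X_{\tilde b}|=q^n$, $\bar{g}(n,k,b)$ is a permutation if and only if $\phi$ is injective. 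The key structural observation is that $\phi$ is a degree-preserving monoid homomorphism on monic polynomials, so by unique factorization in $\F_q[t]$, injectivity on $X_b$ reduces to showing that for every $s\le n+1$, $\phi$ sends monic irreducibles of degree $s$ bijectively to monic irreducibles of the same degree.

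For the sufficient direction, suppose $\gcd(k,q^s-1)=1$ for all $s\le n+1$. Let $F$ be irreducible of degree $s$, with a root $\alpha\in\F_{q^s}^*$ (nonzero since $b\ne 0$). Then $\ord(\alpha)\mid q^s-1$ gives $\gcd(k,\ord(\alpha))=1$; if $\alpha^k\in\F_{q^{s'}}$ for some $s'\mid s$, then $\alpha^{k(q^{s'}-1)}=1$, which forces $\alpha^{q^{s'}-1}=1$ and hence $s'=s$. So $\phi(F)$ is irreducible of degree $s$, and $\phi$ restricted to degree-$s$ irreducibles is injective because the $k$-th power map on $\F_{q^s}^*$ has trivial kernel.

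For the necessary direction, assume $\gcd(k,q^s-1)>1$ for some $s\le n+1$. Pick a prime $d\mid\gcd(k,q^s-1)$ and set $s_0=\ord_d(q)$, so $s_0\mid s\le n+1$. Let $\zeta\in\F_{q^{s_0}}$ be a primitive $d$-th root of unity and $O$ its Frobenius orbit. If $s_0\ge 2$, then $|O|=s_0$ and the cyclotomic identity $d\mid(q^{s_0}-1)/(q-1)$ gives $\prod_{\omega\in O}\omega=1$; hence for any $\gamma$ the multisets $\gamma\cdot O$ and $\{\gamma,\ldots,\gamma\}$ ($s_0$ copies) have the same product and the same multiset of $k$-th powers. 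Taking $\gamma\in\F_q^*$ works whenever $s_0<n+1$, where the remaining roots in $\F_q^*$ provide enough freedom to realize any constant term $b\ne 0$; in the edge case $s_0=n+1$ one instead takes $\gamma\in\F_{q^{n+1}}^*$ with $N_{\F_{q^{n+1}}/\F_q}(\gamma)=(-1)^{n+1}b$, possible by surjectivity of the norm. If instead $s_0=1$, so $\zeta\in\F_q^*\setminus\{1\}$, the analogous collision is between $\{\zeta\gamma_1,\zeta^{-1}\gamma_2\}$ and $\{\gamma_1,\gamma_2\}$ with $\gamma_2\ne\zeta\gamma_1$, which requires $n\ge 1$. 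In every case one obtains distinct $P,Q\in X_b$ with $\phi(P)=\phi(Q)$.

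The main obstacle is arranging the construction to cover \emph{every} $b\ne 0$: in the generic sub-case one keeps all auxiliary roots in $\F_q^*$ and adjusts a spare root to hit $b$, but the edge case $s_0=n+1$ has no spare roots, forcing one to leave $\F_q$ and use an element of $\F_{q^{n+1}}^*$ of prescribed norm; surjectivity of the norm is what makes the proof go through. Finally, the case $b=0$ follows by factoring every $P\in X_0$ as $t^mR(t)$ with $m\ge 1$ and $R(0)\ne 0$; since $\phi$ preserves $m$, applying the $b\ne 0$ result to $R$ of degree $n+1-m$ gives the condition $\gcd(k,q^s-1)=1$ for $s\le n+1-m$, and the most restrictive case $m=1$ yields the stated criterion $s\le n$.
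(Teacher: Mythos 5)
The paper offers no proof of this statement---it is imported wholesale from Lidl and Wells \cite{lidlwells}---so your argument can only be measured against the standard one. Your reformulation of $\bar{g}(n,k,b)$ as the degree-preserving multiplicative map $\phi$ on monic polynomials, the sufficiency direction (for $\gcd(k,q^s-1)=1$, irreducibles of degree $s\le n+1$ with nonzero roots are sent injectively to irreducibles of the same degree, and unique factorization finishes), and the reduction of the $b=0$ case to the $b\ne 0$ case by stripping off $t^m$ are all correct and are essentially how this result is usually proved.

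The gap is in the necessity direction, exactly in the edge case $s_0=n+1$ that you yourself flag as the main obstacle. There you keep the colliding pair $\gamma\cdot O$ versus $\{\gamma,\ldots,\gamma\}$ but move $\gamma$ into $\F_{q^{n+1}}^*$. For $\gamma\notin\F_q$ neither multiset is Galois-stable: if $(t-\gamma)^{n+1}$ lay in $\F_q[t]$, Frobenius would permute its unique root and force $\gamma\in\F_q$; and Frobenius carries the multiset $\gamma\cdot O$ to $\gamma^{q}\cdot O$, which in general differs from $\gamma\cdot O$. So the two polynomials you construct are not in $\F_q[t]$, hence not in $X_b$, and no collision is exhibited. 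You have also conflated $\prod\gamma=\gamma^{n+1}$ (the actual root product of $\{\gamma,\ldots,\gamma\}$) with $N_{\F_{q^{n+1}}/\F_q}(\gamma)=\gamma^{(q^{n+1}-1)/(q-1)}$; these agree only for $\gamma\in\F_q$. This is not a peripheral case: already for $n=1$, $d\mid q+1$ and $b$ a nonsquare it is the entire content of the Dickson criterion $\gcd(k,q^2-1)=1$. The repair is to collide the two Frobenius orbits $\{\gamma^{q^j}\}_j$ and $\{(\gamma\zeta)^{q^j}\}_j$ for a suitable $\gamma$ of degree $n+1$ with $N_{\F_{q^{n+1}}/\F_q}(\gamma)=(-1)^{n+1}b$: both orbits are Galois-stable, they have equal root products because $N(\zeta)=\zeta^{(q^{n+1}-1)/(q-1)}=1$, equal multisets of $k$-th powers because $d\mid k$ kills each factor $\zeta^{q^j}$, and they are distinct for a suitable choice of $\gamma$ in the (large) norm fiber. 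A smaller instance of the same oversight occurs in your $s_0=1$ case when the spare roots cannot realize $b$: for $q=3$, $n=1$, $b=-1$, $k$ even, there is no collision at all with roots in $\F_3^*$ (the multiset $\{-\gamma_1,-\gamma_2\}$ with $\gamma_1\gamma_2=-1$ always equals $\{\gamma_1,\gamma_2\}$); the true collision is between the irreducible quadratics $t^2+t+2$ and $t^2+2t+2$, i.e.\ again between Frobenius orbits in an extension field.
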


Note that the case $n=1$ of this theorem is the well known criteria for 
Chebyshev polynomials and power maps to be permutations. It follows from this 
theorem and Dirichlet's theorem on primes in arithmetic progression, the 
polynomial $g(1,k,b)$ induces a permutation of $\F_q$ for an infinite 
number of finite fields when $\gcd(k,6)=1$. This fact has a remarkable converse. 
If $f(x)\in\Z[x]$ permutes $\F_p$ for an infinite number of primes $p$, then it 
is a composition of linear polynomials and the polynomials $g(1,k,b)$. 
This is known as Schur's problem and proved by Fried \cite{fried}. Other proofs 
have been given by Turnwald \cite{turnwald} and M\"{u}ller \cite{muller}.

Inspired by this state of art, Lidl and Wells make the following conjecture in  
their manuscript \cite{lidlwells}: If $h_1(x_1, \ldots, x_n), \ldots, h_n(x_1, 
\ldots, x_n)$ are integral polynomials such that 
\[ H:(x_1,\ldots,x_n)\mapsto (h_1(x_1,\ldots,x_n),\ldots,h_n(x_1,\ldots,x_n))\]
is a permutation of $\F_p^n$ for an infinite number of primes, then $H$ is a 
composition of linear polynomial vectors and polynomial vectors $g(k,n,b)$ 
where $k$ and $b$ are various integers. We will show in the latter sections 
that the families of bivariate polynomials associated with $B_2\cong C_2$ and 
$G_2$ constitute counterexamples to this conjecture. 

The construction of Lidl and Wells is related with the Lie algebra $A_n$. Given 
$\mathbf{x}=(x_1,\ldots,x_n)$, set $x_{0}=-(x_1+\ldots+x_n)$. Let $\sigma_i$ 
denote the $i$th elementary symmetric polynomial. The maps $\varphi_i$ of 
Theorem~\ref{veselov} turn out to be
\[ \varphi_i(\mathbf{x}) = \sigma_i(e^{-2\pi i x_0},\ldots,e^{-2\pi i 
x_{n}}). \]
See \cite{hoffwith} for the details. We have $\varPhi_{A_n} = (\varphi_1, 
\ldots, \varphi_n)$ and the associated infinite series of integrable polynomial 
mappings $P_{A_n}^k$ are determined from the conditions
\[ \varPhi_{A_n}(k\mathbf{x})=P_{A_n}^k(\varPhi_{A_n}(\mathbf{x})). \]
In equation (\ref{eqn:A}), we observe that $x_i = (-1)^i \sigma_i(t_1, \ldots, 
t_{n+1})$ for each $i\in\{1,\ldots,n\}$. In order to deal with plus or minus 
signs, we define 
\[L(x_1,x_2,\ldots,x_n) = ((-1)^1x_1, (-1)^2x_2, \ldots, (-1)^nx_n).\]
Note that $\sigma_{n+1}(e^{-2\pi i x_0},\ldots,e^{-2\pi i x_{n}})=1$ and 
$b=(-1)^{n+1} \sigma_{n+1} (t_1, \ldots, t_{n+1})$. The relation between the 
bivariate maps $P_{A_n}^k$ and $g(n,k,b)$ is given by
\[P_{A_n}^k = L\circ g(n,k,(-1)^{n+1}) \circ L^{-1}.\]
\begin{remark}\label{rmrk-an}
 Theorem~\ref{lidlwellsmain} remains true for $P_{A_n}^k$ since it is conjugate 
to the mapping $g(n,k,(-1)^{n+1})$ under a linear transformation. More precisely 
$P_{A_n}^k$ induces a permutation of $\F_q^2$ if and only if $\gcd(k,q^s-1)=1$ 
for $s=1,2,\ldots,n+1$.
\end{remark}

\section{The family associated with $B_2 \cong C_2$}\label{sec:B}
We refer to \cite{lie} for a nice introduction to the theory of Lie algebras. 
Let $\{\alpha_1, \alpha_2\}$ be a choice of simple roots for the Lie algebra 
$B_2$ with Cartan matrix 
\[\left[\begin{array}{cc}2 & -2\\-1 & 2 \end{array}\right].\]
The transpose of this matrix transforms the fundamental weights into the 
fundamental roots. We have
\begin{align*}
 \alpha_1&=2\omega_1-\omega_2,\\
 \alpha_2&=-2\omega_1+2\omega_2.
\end{align*}
The function $\varPhi_{B_2}=(\varphi_1,\varphi_2)$ of Theorem~\ref{veselov} is 
obtained by the action of the Weyl group on the fundamental weights $\omega_1$ 
and $\omega_2$. The functions $\varphi_1$ and $\varphi_2$ turn out to 
be
\begin{align*}
 \varphi_1(\sigma,\tau) &= e^{2\pi i \sigma}+e^{-2\pi i \sigma}+e^{2\pi i 
\tau}+e^{-2\pi i \tau}\\
 \varphi_2(\sigma,\tau) & = e^{2\pi i (\sigma+\tau)}+e^{2\pi i 
(\sigma-\tau)}+e^{2\pi i (-\sigma+\tau)}+e^{2\pi i (-\sigma-\tau)}
\end{align*}
If $(\sigma,\tau)\in\R^2$, then we can simply write
\begin{align*}
 \Phi_{B_2}(\sigma,\tau) = (2\cos(2\pi \sigma)+2\cos(2\pi \tau),2\cos(2\pi 
\sigma)2\cos(2\pi \tau)).
\end{align*}
Hofmann and Withers call this map the generalized cosine function for the 
underlying Lie algebra \cite{hoffwith}. Theorem~\ref{veselov} implies that 
there are bivariate polynomial mappings $P_{B_2}^k$, determined from the 
conditions 
$\varPhi_{B_2}(k\mathbf{x})=P_{B_2}^k(\varPhi_{B_2}(\mathbf{x}))$ where 
$\mathbf{x}=(\sigma,\tau)$. For simplicity, let us put 
\[ \mc{B}_k := P_{B_2}^k. \]
These maps satisfy the composition property $\mc{B}_{kl} = \mc{B}_{k} \circ 
\mc{B}_{l} = \mc{B}_{l} \circ \mc{B}_{k}$ by their definition. The first few 
examples of these polynomials are:
\begin{align*}
 \mc{B}_0(x,y)&=(4,4),\\
 \mc{B}_1(x,y)&=(x,y),\\
 \mc{B}_2(x,y)&=(x^2-2y-4,y^2-2x^2+4y+4),\\
 \mc{B}_3(x,y)&=(x^3-3xy-3x,y^3-3x^2y+6y^2+9y).
\end{align*}
There is a recurrence relation satisfied by these maps from which it is 
straightforward to calculate further $\mc{B}_k$ \cite{withers}. If 
$\mc{B}_k=(f_k,g_k)$, then
\begin{align*}
 f_{k+4}&=x(f_{k+3}+f_{k+1})-(2+y)f_{k+2}-f_k,\\
 g_{k+4}&=y(g_{k+3}+g_{k+1})-(x^2-y-2)g_{k+2}-g_k.
\end{align*}

Consider the map $\phi(t_1,t_2) = (t_1+1/t_1,t_2+1/t_2)$ and $\psi(u_1,u_2) = 
(u_1+u_2,u_1u_2)$. The bivariate map $\mc{B}_k$ fits into the following 
commutative diagram:
\begin{center}
\begin{tikzpicture}
  \matrix (m) [matrix of math nodes,row sep=1.5em,column sep=18em,minimum 
width=2em] {
     {\C^*}^2 & {\C^*}^2 \\
     \C^2 & \C^2 \\
     \C^2 & \C^2 \\};
  \path[-stealth]
    (m-1-1) edge node [left] {$\phi$} (m-2-1)
            edge node [above] 
{$(t_1,t_2)\mapsto(t_1^k,t_2^k)$} (m-1-2)
    (m-2-1) edge node [left] {$\psi$} (m-3-1)
    edge node [above] {$(u_1,u_2)\mapsto(D_k(u_1),D_k(u_2))$} 
(m-2-2)
    (m-1-2) edge node [right] {$\phi$} (m-2-2)
    (m-3-1) edge node [above] {$(x,y)\mapsto \mc{B}(x,y)$} (m-3-2)
    (m-2-2) edge node [right] {$\psi$} (m-3-2);
\end{tikzpicture}\end{center}
Recall that the $k$th Dickson polynomial satisfies $D_k=P_{A_1}^k$. Note that 
$\varPhi_{B_2}(\sigma,\tau)=\psi(\phi(e^{2\pi i \sigma}, e^{2\pi i \tau}))$.
The commutativity of this diagram now follows from Theorem~\ref{veselov}. An 
important consequence of this diagram is the following:
\begin{lemma}\label{lemmab2}
 Let $q$ be a power of a prime $p$. Then $\mc{B}_q(x,y)\equiv(x^q,y^q) 
\pmod{p}$.
\end{lemma}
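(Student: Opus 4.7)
The plan is to exploit the commutative diagram displayed just before the lemma, reducing everything to the analogous statement for the univariate Dickson polynomial.

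First I would record the polynomial identity arising from the bottom two squares of the diagram: if we write $\mc{B}_k = (f_k, g_k)$, then for all indeterminates $u_1, u_2$,
\[
f_k(u_1+u_2,\, u_1 u_2) = D_k(u_1) + D_k(u_2), \qquad g_k(u_1+u_2,\, u_1 u_2) = D_k(u_1) D_k(u_2),
\]
and this holds in $\Z[u_1, u_2]$ because all polynomials in sight have integer coefficients.

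Next I would invoke the well-known fact that $D_q(u) \equiv u^q \pmod{p}$ whenever $q$ is a power of $p$. The quickest way is to note that $D_q(y + 1/y) = y^q + 1/y^q$, while in characteristic $p$ the Frobenius map gives $(y + 1/y)^q = y^q + 1/y^q$; so $D_q(u) - u^q$ vanishes on every value of the form $y + 1/y$ in $\overline{\F_p}$, which is all of $\overline{\F_p}$, forcing $D_q(u) \equiv u^q \pmod p$. Substituting this into the identities above and using that Frobenius is a ring homomorphism mod $p$,
\[
f_q(u_1+u_2, u_1 u_2) \equiv u_1^q + u_2^q \equiv (u_1+u_2)^q \pmod{p},
\]
\[
g_q(u_1+u_2, u_1 u_2) \equiv u_1^q u_2^q \equiv (u_1 u_2)^q \pmod{p}.
\]

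Finally I would descend from these identities in $\F_p[u_1, u_2]$ to the desired identity in $\F_p[x, y]$. The substitution homomorphism $\F_p[x,y] \to \F_p[u_1, u_2]$ sending $x \mapsto u_1+u_2$ and $y \mapsto u_1 u_2$ has image equal to the ring of symmetric polynomials and is injective by the fundamental theorem on symmetric polynomials (equivalently, $u_1 + u_2$ and $u_1 u_2$ are algebraically independent over any field). Applying this to $f_q(x,y) - x^q$ and $g_q(x,y) - y^q$ yields $\mc{B}_q(x,y) \equiv (x^q, y^q) \pmod{p}$.

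There is no real obstacle; the only point that requires a moment of care is the last descent step, since a priori the polynomial identity only holds after the symmetrizing substitution. Fortunately the algebraic independence of the elementary symmetric polynomials in two variables, which is exactly what makes the construction in Section~\ref{sec:A} well-defined, supplies the needed injectivity without any extra work.
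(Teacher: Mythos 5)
Your proof is correct and follows essentially the same route as the paper: both reduce the claim to $D_q(u)\equiv u^q\pmod{p}$ via the commutative diagram and then push the congruence through $\psi$ using the Frobenius. The only differences are cosmetic --- you derive $D_q(u)\equiv u^q$ from the functional equation $D_q(y+1/y)=y^q+1/y^q$ rather than from the explicit coefficient formula the paper uses, and you spell out the final descent step (injectivity of $x\mapsto u_1+u_2$, $y\mapsto u_1u_2$) that the paper leaves implicit.
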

\begin{proof}
The coefficients of the Dickson polynomials $D_k(x)$ can be computed using the 
following formula:
\[ D_k(x) = \sum_{i=0}^{\lfloor k/2\rfloor} \frac{k(-1)^i}{k-i}\binom{k-i}{i}
 x^{k-2i}.\]
Let $q$ be a power of a prime $p$. It is easily verified using this formula 
that $D_q(x)\equiv x^q \pmod{p}$. It follows that
 \begin{align*}
  \psi({D}_q(u_1),{D}_q(u_2)) &\equiv \psi(u_1^q,u_2^q) \pmod{p}\\
  &\equiv(u_1^q+u_2^q,(u_1^q u_2^q)) \pmod{p}\\
  &\equiv((u_1+u_2)^q,(u_1u_2)^q) \pmod{p}.
 \end{align*}
Therefore $\mc{B}_q(x,y)\equiv(x^q,y^q) \pmod{p}$.
\end{proof}

In the theory of dynamics, the (forward) orbit of a point $\alpha\in S$ under 
$f:S\rightarrow S$ is the set $ \mc{O}_f(\alpha)=\{ f^n(\alpha) \mathrel| 
n\geq 0 \}$ by definition. A point $\alpha$ is said to have a bounded orbit 
under $f$ if the set $\mc{O}_f(\alpha)$ is bounded. See Silverman 
\cite{sil-dyn} for a nice introduction to dynamical systems with an emphasis on 
their arithmetical aspects. 

Let $k\geq 2$ be an integer. Consider the power map $z\mapsto z^k$ on $\C^*$. 
The set of points with bounded orbits under the power map is the unit circle 
$\{ z \mathrel| |z|=1 \}$. Using the commutative diagram above, we see that the 
set of points with bounded orbits under $\mc{B}_k:\C^2 \rightarrow \C^2$ is 
$\Delta_{B_2}=\{ \psi(\phi(z_1,z_2)) \mathrel| |z_1|=1, |z_2|=1 \}$. This set 
can be described with the help of $\varPhi_{B_2}$ as well. More precisely, we 
have
\[ \Delta_{B_2}=\{ \varPhi_{B_2}(\sigma,\tau) \mathrel| \sigma,\tau\in\R\}. \]

A point $\alpha$ that is fixed under $f$ has a bounded orbit since 
$\mc{O}_f(\alpha)$ consists of a single point. As a result the set 
$\Fix(\mc{B}_k)=\{\alpha\in \C^2 \mathrel| \mc{B}_k(\alpha)=\alpha\}$ is 
contained in $\Delta_{B_2}$. In other words a point that is fixed under 
$\mc{B}_k:\C^2 \rightarrow \C^2$ is of the form $\varPhi_{B_2}(\sigma,\tau)$ for 
some $\sigma,\tau\in \R$. The set $\Delta_{B_2}$, which is shown in 
Fig.~\ref{fig:funB2}, is contained in $\R^2$.

\begin{figure}[htbp]
    \centering
 \includegraphics[scale=0.8]{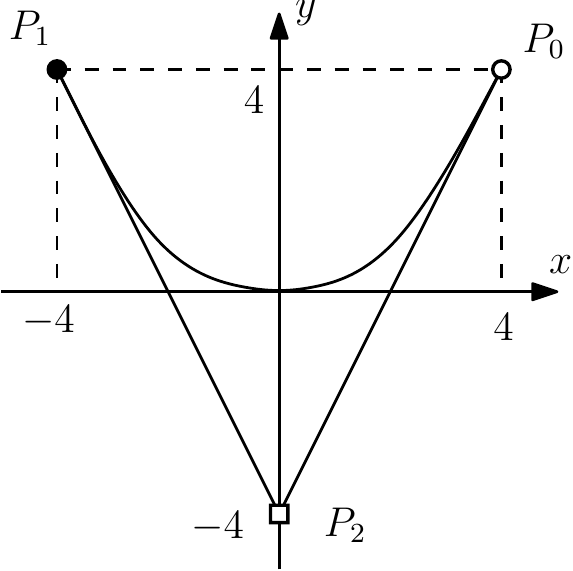}
    \caption{The set $\Delta_{B_2}$.}
    \label{fig:funB2}
\end{figure}

There are three corner points of $\Delta_{B_2}$, namely $P_0=(4,4), P_1=(-4,4)$ 
and $P_2=(0,-4)$. The set  $\Delta_{B_2}$ is bounded by the lines $y+4\pm 2x=0$ 
and the parabola $4y=x^2$. We want to find a fundamental region in 
$\sigma\tau$-plane whose elements are in one-to-one correspondence with the
elements of $\Delta_{B_2}$ under $\varPhi_{B_2}$. If $(\sigma,\tau) 
\equiv (\sigma',\tau')\pmod{\Z^2}$, then it is easy to see that $\varPhi_{B_2} 
(\sigma,\tau) = \varPhi_{B_2} (\sigma',\tau')$. Thus it is enough to consider $0 
\leq \sigma,\tau \leq 1$. Moreover there are extra symmetries coming from the 
action of the Weyl group. Observe that $\varPhi_{B_2}(\sigma,\tau)$ is equal to 
any one of the following eight expressions:
\[\begin{array}{ll|ll}
\text{I} &\varPhi_{B_2}(\sigma,\tau) &\text{V} &\varPhi_{B_2}(\tau,\sigma)\\
\text{II} &\varPhi_{B_2}(-\sigma,\tau) &\text{VI} &\varPhi_{B_2}(-\tau,\sigma)\\
\text{III} &\varPhi_{B_2}(\sigma,-\tau) &\text{VII} 
&\varPhi_{B_2}(\tau,-\sigma)\\
\text{IV} &\varPhi_{B_2}(-\sigma,-\tau) &\text{VIII} 
&\varPhi_{B_2}(-\tau,-\sigma)
\end{array}\]
Under these symmetries, the square $0 \leq \sigma,\tau \leq 1$ can be separated 
into eight subtriangles. This is shown in Fig.~\ref{fig:RB2}. Define
\[ R_{B_2}=\{ (\sigma,\tau)\in\R^2 \mathrel| 0 \leq \sigma \leq 1/2 
\text{ and } \sigma \leq \tau \leq 1/2  \}. \]
Note that the restricted function $\varPhi_{B_2}:R_{B_2} \rightarrow 
\Delta_{B_2}$ is one-to-one and onto. Set $\tilde{P}_0= (0,0), \tilde{P}_1= 
(1/2,1/2)$ and $\tilde{P}_2= (0,1/2)$. Then $\varPhi_{B_2} (\tilde{P}_i)=P_i$ 
for each $i\in\{1,2,3\}$. This correspondence (and more) is symbolized by the 
use of different marks, such as circles, disks and squares.

\begin{figure}[htbp]
    \centering
 \includegraphics[scale=0.8]{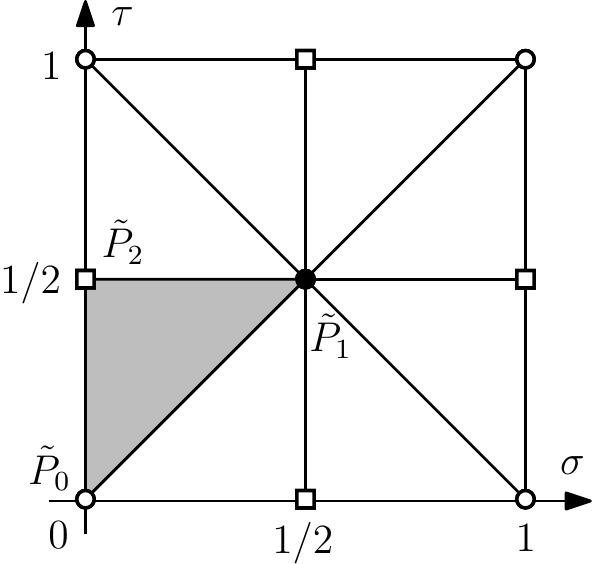}
    \caption{The fundamental region $R_{B_2}$.}
    \label{fig:RB2}
\end{figure}

Now, we are ready analyze the set of fixed points under the bivariate map 
$\mc{B}_k$. 

\begin{theorem}\label{fixb2}
Let $k \geq 2$ be a fixed integer. Then 
\[ \Fix(\mc{B}_k)=\left\{ \varPhi_{B_2}\left( \frac{d}{k\pm1}, \frac{e}{k\pm1} 
\right):d,e\in\Z \right\} \cup \left\{ \varPhi_{B_2}\left( \frac{d}{k^2\pm1}, 
\frac{\pm k d}{k^2\pm1} \right):d\in\Z \right\}\]
where the signs of $k^2\pm 1$ terms agree.
\end{theorem}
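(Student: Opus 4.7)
The plan is to lift the equation $\mc{B}_k(\alpha)=\alpha$ through $\varPhi_{B_2}$ and then read off the solutions by analyzing the symmetries of its fibers. Any fixed point of $\mc{B}_k$ has a bounded orbit, so the commutative diagram identifies $\Fix(\mc{B}_k) \subseteq \Delta_{B_2}$. Using the bijection $\varPhi_{B_2}:R_{B_2}\to \Delta_{B_2}$, write $\alpha=\varPhi_{B_2}(\sigma,\tau)$. The semi-conjugacy $\varPhi_{B_2}(k\mathbf{x})=\mc{B}_k(\varPhi_{B_2}(\mathbf{x}))$ from Theorem~\ref{veselov} converts the fixed-point equation into
\[ \varPhi_{B_2}(k\sigma,k\tau)=\varPhi_{B_2}(\sigma,\tau). \]

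The next step is to exploit the fact that the fibers of $\varPhi_{B_2}$ are precisely the orbits of the translation action of $\Z^2$ combined with the eight Weyl-group symmetries enumerated just before the theorem. Hence the displayed equation is equivalent to
\[ (k\sigma,k\tau)\equiv(\epsilon_1\sigma,\epsilon_2\tau)\pmod{\Z^2} \quad\text{or}\quad (k\sigma,k\tau)\equiv(\epsilon_1\tau,\epsilon_2\sigma)\pmod{\Z^2} \]
for some $\epsilon_1,\epsilon_2\in\{\pm1\}$.

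The first pattern immediately gives $(k-\epsilon_1)\sigma\in\Z$ and $(k-\epsilon_2)\tau\in\Z$, so $\sigma=d/(k\mp1)$ and $\tau=e/(k\mp1)$ for integers $d,e$, the two signs being independent. Ranging over all four sign combinations recovers the first set in the theorem. For the second pattern, substituting $\tau\equiv\epsilon_1 k\sigma\pmod{\Z}$ from the first congruence into the second produces $(k^2-\epsilon_1\epsilon_2)\sigma\in\Z$, so $\sigma=d/(k^2\mp1)$ with the sign of the denominator determined by $\epsilon_1\epsilon_2$; then $\tau\equiv\epsilon_1 kd/(k^2\mp1)\pmod{\Z}$, which modulo $\Z$ gives $\tau=\pm kd/(k^2\mp1)$. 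Since both signs are forced by the value of $\epsilon_1\epsilon_2$, they agree, yielding exactly the second set.

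The converse direction is immediate: every point in either claimed family satisfies one of the two congruence patterns, hence $\varPhi_{B_2}(k\sigma,k\tau)=\varPhi_{B_2}(\sigma,\tau)$, so $\varPhi_{B_2}(\sigma,\tau)\in\Fix(\mc{B}_k)$. The main obstacle is not any single computation but the careful bookkeeping in the second case, where one must recognize that the two $\pm$ symbols are coupled through the product $\epsilon_1\epsilon_2$; the other ingredients (identification of bounded orbits with $\Delta_{B_2}$, the semi-conjugacy, and the fundamental region $R_{B_2}$ with its eight Weyl images) are all furnished by the preceding material.
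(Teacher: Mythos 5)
Your proposal is correct and follows essentially the same route as the paper: reduce $\mc{B}_k(\alpha)=\alpha$ to $\varPhi_{B_2}(k\sigma,k\tau)=\varPhi_{B_2}(\sigma,\tau)$ via bounded orbits and the semi-conjugacy, then run through the eight Weyl/translation symmetries; the paper works out only the representative case VI and declares the rest similar, while you organize all eight cases at once with the signs $\epsilon_1,\epsilon_2$. The only difference is bookkeeping, not substance.
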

\begin{proof}
Let $\alpha = \varPhi_{B_2}(\sigma,\tau)$ be a fixed point under the map 
$\mc{B}_k$. Then we have $\varPhi(k\sigma,k\tau) = \varPhi(\sigma,\tau)$.
In the statement of the theorem, there are eight different choices of sign, 
each one of which corresponds to one of the eight symmetries above. We will 
prove the theorem for one of these. The others are similar. Suppose that 
$(k\sigma,k\tau) \equiv (-\tau,\sigma)$ modulo $\Z^2$. This is the type VI. In 
this case 
\[k^2\sigma \equiv -k\tau \equiv -\sigma \pmod{\Z}.\] 
It follows that $\sigma=d/(k^2+1)$ and therefore $\tau= -k d/(k^2+1)$ for 
some integer $d$. Thus $\alpha$ is of the form 
$\varPhi_{B_2}(d/(k^2+1),-kd/(k^2+1))$.
\end{proof}

The following theorem gives the cardinality of the set of fixed points under 
the bivariate map $\mc{B}_k:\C^2\rightarrow \C^2$.
\begin{theorem}\label{uchib2}
Let $k\geq2$ be a fixed integer. Then $|\Fix(\mc{B}_k)|=k^2$.
\end{theorem}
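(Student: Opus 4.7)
The plan is to count $\Fix(\mc{B}_k)$ by lifting through $\varPhi_{B_2}$. Recall from the discussion before Theorem~\ref{fixb2} that $\varPhi_{B_2}$ induces a bijection from $(\R^2/\Z^2)/W$ onto $\Delta_{B_2}$, where $W$ is the Weyl group of type $B_2$ (of order $8$) acting through the eight symmetries listed there. By Theorem~\ref{fixb2}, a point $\alpha = \varPhi_{B_2}(\sigma,\tau)$ lies in $\Fix(\mc{B}_k)$ exactly when $(k\sigma,k\tau) \equiv w(\sigma,\tau) \pmod{\Z^2}$ for some $w \in W$. My strategy is to count the pairs $\bigl((\sigma,\tau),w\bigr) \in (\R^2/\Z^2) \times W$ satisfying this congruence, and then show that every fixed point is hit the same number of times.

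For each $w \in W$, viewed as a signed permutation matrix, the number of $(\sigma,\tau) \in \R^2/\Z^2$ solving $(kI - w)(\sigma,\tau)^T \in \Z^2$ equals $|\det(kI - w)|$. The group $W$ consists of $I$, $-I$, two rotations by $\pm 90^\circ$, and four reflections; an eigenvalue computation (reflections have eigenvalues $\pm 1$ and rotations by $\pm 90^\circ$ have eigenvalues $\pm i$) gives the corresponding determinants $(k-1)^2$, $(k+1)^2$, $k^2+1$ (twice), and $k^2-1$ (four times). All are positive for $k \geq 2$, and they sum to
\[ (k-1)^2 + (k+1)^2 + 2(k^2+1) + 4(k^2-1) = 8k^2. \]

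Next I would apply orbit--stabilizer to show each fixed point contributes the same amount to this sum. If $\alpha \in \Fix(\mc{B}_k)$ has a lift $(\sigma,\tau)$ with stabilizer $S \leq W$ modulo $\Z^2$, then the $W$-orbit of $(\sigma,\tau)$ on the torus has size $|W|/|S|$; and for each representative $(\sigma',\tau')$ of that orbit, the set of $w$ with $(k\sigma',k\tau') \equiv w(\sigma',\tau') \pmod{\Z^2}$ is nonempty by $W$-equivariance and forms a coset of $S$ of size $|S|$. These two factors cancel, so $\alpha$ contributes exactly $|W|=8$ to the sum. Hence $|\Fix(\mc{B}_k)|=8k^2/8=k^2$. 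The delicate point is precisely this uniformity: the corner points $P_0, P_1, P_2$ have nontrivial stabilizers in $W$, so a priori one might fear exceptional counts there, but the orbit--stabilizer cancellation shows they contribute the full $|W|$ just like any generic fixed point, leaving the final count uncorrupted.
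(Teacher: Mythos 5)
Your proof is correct, but it takes a genuinely different route from the paper's. The paper follows Uchimura's geometric argument: it subdivides the fundamental region $R_{B_2}$ into $k^2$ subtriangles, each mapped affinely onto $R_{B_2}$ by multiplication by $k$, finds exactly one fixed point of the inverse affine contraction in each subtriangle, and then argues by hand that no point is counted twice --- the delicate step being the analysis of coincidences along the boundaries of the $T_j$ and at the corners $\tilde P_i$. Your argument replaces all of this with a Burnside-type count on the torus: the identity $\sum_{w\in W}|\det(kI-w)| = (k-1)^2+(k+1)^2+2(k^2+1)+4(k^2-1)=8k^2$ together with the orbit--stabilizer cancellation, which disposes of the boundary and corner coincidences uniformly instead of case by case. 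Both proofs rest on the same geometric input that the paper asserts rather than proves in detail, namely that $\varPhi_{B_2}$ identifies $(\R^2/\Z^2)/W$ bijectively with $\Delta_{B_2}$, i.e.\ that the fibers of $\varPhi_{B_2}$ are exactly the $W$-orbits; so neither argument is more self-contained on that front. What your approach buys is robustness and generality: the identical computation for the order-$12$ Weyl group of $G_2$ gives $(k-1)^2+(k+1)^2+2(k^2-k+1)+2(k^2+k+1)+6(k^2-1)=12k^2$, reproving the corresponding counting theorem for $\mc{G}_k$ with no new geometric work; indeed the average of $\det(kI-w)$ over a rank-$n$ Weyl group is always $k^n$, because the nontrivial exterior powers of the reflection representation have no $W$-invariants. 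One small imprecision to fix: for a non-base representative $(\sigma',\tau')=u(\sigma,\tau)$ of the orbit, the set of admissible $w$ is a coset of the conjugate stabilizer $uSu^{-1}$ rather than of $S$ itself, but its cardinality is still $|S|$, which is all the cancellation requires. You should also record explicitly that $kI-w$ is an integer matrix (as $w$ is a signed permutation matrix), so that the solution count on the torus is indeed $|\det(kI-w)|$.
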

\begin{proof}
We follow the idea of Uchimura \cite{uchimura}. The fundamental region 
$R_{B_2}$ is a closed bounded domain. Divide $R_{B_2}$ into $k^2$ subtriangles 
$T_1,\ldots,T_{k^2}$ such that each one of them is mapped onto $R_{B_2}$ under 
the multiplication by $k$. This is illustrated for $k=2$ and $k=3$ in 
Fig.~\ref{fig:divb23}. 

\begin{figure}[htbp]
    \centering
 \includegraphics[scale=0.9]{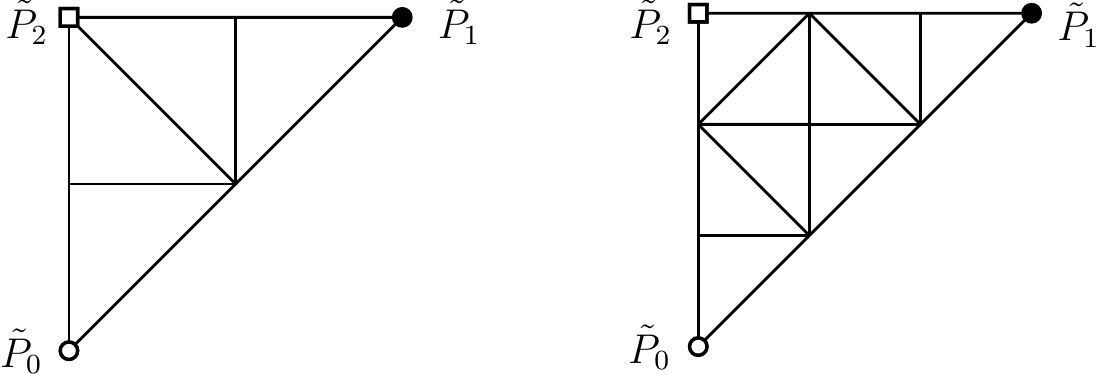}
    \caption{The subtriangles $T_j$ of $R_{B_2}$ for $k=2$ and $k=3$.}
    \label{fig:divb23}
\end{figure}

Consider the inverse map from $T_j$ to $R_{B_2}$ which is division by $k$ 
together with a suitable linear translation. Being a continuous map there exists 
at least one fixed point of this map. Moreover there is at most one such point 
in each $T_j$ because of the linearity. 

It remains to see that these points are distinct from each other. Such a 
repetition can occur only at the boundaries of the subtriangles $T_j$. However 
the multiplication by $k$ maps such a boundary to the boundary of $R_{B_2}$. 
The triangles meet at $k$ division points and they are mapped to one of the 
corner points. On the other hand a corner point $\tilde{P}_i$, that is fixed 
under multiplication by $k$, lies in only one of the triangles $T_j$. To 
see this, observe that $P_2$ is fixed under $\mc{B}_k$ if and only if $k$ is 
odd. In that case $\tilde{P}_2$ is contained in only one of the subtriangles 
$T_j$.
\end{proof}

Now, we collect several results we have proved so far and give the following 
correspondence between $ \Fix(\mc{B}_q)$ and $\F_q^2$.
\begin{lemma}\label{corresb2}
Let $q$ be a power a prime $p$ and let $k\geq 2$ be a fixed integer. Consider 
the cyclotomic number field $K=\Q(\zeta_{q^4-1})$ which contain the coordinates 
of the points fixed under $\mc{B}_k$. Let $\mf{p}$ be a prime ideal of $K$ 
lying over $p$. Then there exists a one-to-one correspondence
\[ \Fix(\mc{B}_q) \longleftrightarrow \F_q^2 \]
which is given by the reduction modulo $\mf{p}$.
\end{lemma}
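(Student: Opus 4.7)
The plan is to combine Lemma~\ref{lemmab2}, Theorems~\ref{fixb2} and~\ref{uchib2}, and an application of Hensel's lemma. First I would check that reduction modulo $\p$ is a well-defined map from $\Fix(\mc{B}_q)$ into $\F_q^2$; then I would use the invertibility of the Jacobian of the fixed-point equation modulo $\p$ to deduce injectivity via Hensel's lemma, and conclude from the cardinality equality $|\Fix(\mc{B}_q)| = q^2 = |\F_q^2|$.

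For the first step, Theorem~\ref{fixb2} tells us that every element of $\Fix(\mc{B}_q)$ has the form $\varPhi_{B_2}(\sigma,\tau)$ with $\sigma, \tau$ rationals whose denominators divide $q\pm 1$ or $q^2\pm 1$, each of which divides $q^4 - 1$. The coordinates of $\varPhi_{B_2}(\sigma,\tau)$ are therefore sums of $(q^4-1)$-th roots of unity, hence algebraic integers of $K$, so $\Fix(\mc{B}_q) \subset \ok^2$ and reduction modulo $\p$ is defined. For any fixed point $(x,y)$, Lemma~\ref{lemmab2} gives
\[ (x,y) = \mc{B}_q(x,y) \equiv (x^q, y^q) \pmod{\p}, \]
so the residues $\bar x, \bar y$ satisfy $T^q = T$ in $\ok/\p$ and thus lie in $\F_q$. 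This yields the reduction map $\rho: \Fix(\mc{B}_q) \to \F_q^2$.

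For injectivity I would differentiate the fixed-point equation. Writing $\mc{B}_q = (f_q, g_q)$, the Jacobian of $\mc{B}_q(x,y) - (x,y)$ is
\[ J(x,y) = \begin{pmatrix} \partial f_q/\partial x - 1 & \partial f_q/\partial y \\ \partial g_q/\partial x & \partial g_q/\partial y - 1 \end{pmatrix}. \]
By Lemma~\ref{lemmab2} the polynomials $f_q - x^q$ and $g_q - y^q$ lie in $p\Z[x,y]$, and since the derivatives of $x^q$ and $y^q$ vanish modulo $p$, every partial derivative of $f_q$ and $g_q$ is divisible by $p$. Hence $J(x,y) \equiv -I \pmod{\p}$ at every point, making every root of the reduced system $(x^q - x,\, y^q - y) = 0$ a simple root. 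Applying Hensel's lemma at $\p$, each of the $q^2$ roots $(\bar a, \bar b) \in \F_q^2$ lifts uniquely to a solution of $\mc{B}_q(x,y) = (x,y)$ in the $\p$-adic completion of $\ok$. Since $\Fix(\mc{B}_q) \subset \ok^2$ embeds into this completion, $\rho$ is forced to be injective; combined with $|\Fix(\mc{B}_q)| = q^2 = |\F_q^2|$ from Theorem~\ref{uchib2}, $\rho$ is a bijection.

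The only delicate point is verifying that the Jacobian is invertible modulo $\p$, which rests on reading Lemma~\ref{lemmab2} as a congruence of polynomials (as its proof actually establishes) rather than just of values, so that one may differentiate term by term. With that in hand, Hensel's lemma plus the Uchimura-style count of Theorem~\ref{uchib2} combine to give the bijection directly.
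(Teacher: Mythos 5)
Your proposal is correct, and its skeleton (reduction is well defined and lands in $\F_q^2$ by Lemma~\ref{lemmab2}; the map is injective; conclude by the count $|\Fix(\mc{B}_q)|=q^2$ from Theorem~\ref{uchib2}) is the same as the paper's. The difference is in the injectivity step: the paper simply asserts that distinct fixed points reduce to distinct solutions of $x^q-x\equiv 0$, $y^q-y\equiv 0$, implicitly leaning on the explicit description of $\Fix(\mc{B}_q)$ in terms of roots of unity of order prime to $p$ (which reduce injectively modulo $\p$ since $x^n-1$ is separable in characteristic $p$ when $p\nmid n$). You instead prove injectivity by a Jacobian computation and the multivariate Hensel lemma: since $f_q-x^q$ and $g_q-y^q$ lie in $p\Z[x,y]$ as polynomials (which the proof of Lemma~\ref{lemmab2} does establish) and $q\,x^{q-1}\equiv 0\pmod p$, the Jacobian of $\mc{B}_q-\mathrm{id}$ is $-I$ modulo $\p$, so each point of $\F_q^2$ has a unique lift to a fixed point in the completion $\oo_\p$. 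This is a genuinely self-contained argument that fills a gap the paper glosses over, at the cost of invoking $p$-adic machinery; your closing caveat about needing the congruence of Lemma~\ref{lemmab2} at the level of polynomials rather than values is exactly the right point to flag. Both routes are valid, and yours has the advantage of not requiring any further analysis of which pairs $(\sigma,\tau)$ give rise to the same point of $\Fix(\mc{B}_q)$.
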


\begin{proof}
There are $q^2$ fixed points of $\mc{B}_q$. We have $\mc{B}_q \equiv (x^q,y^q) 
\pmod{p}$ by Lemma~\ref{lemmab2}. The reduction of each point 
$(x,y)\in\Fix(\mc{B}_q)$ modulo $\mf{p}$ gives a different solution of 
the equations $x^q-x\equiv 0\pmod{p}$ and $y^q-y\equiv 0\pmod{p}$.
\end{proof}

This correspondence is compatible with the action of $\mc{B}_k$. If $\alpha = 
\varPhi_{B_2}(\sigma,\tau)$ is fixed under $\mc{B}_q$, then its coordinates are 
algebraic integers. Note that $\mc{B}_k(\alpha)$ is fixed under $\mc{B}_q$ too. 
Let $x\mapsto \overline{x}$ be the reduction map of the theorem. Then we have
\[ \overline{\mc{B}_k(\alpha)} = \mc{B}_k(\overline{\alpha}). \]
This characterization of $\F_q^2$, which is compatible with the action of 
$\mc{B}_k$, allows us to obtain the main result of this section.
\begin{theorem}\label{mainb2}
 The bivariate polynomial mapping $\mc{B}_k$ associated with $B_2\cong 
C_2$ induces a permutation of $\F_q^2$ if and only if $\gcd(q^4-1,k)=1$.
\end{theorem}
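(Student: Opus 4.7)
The plan is to convert the permutation question on $\F_q^2$ into a question about multiplication by $k$ on the $(\sigma,\tau)$-coordinates of $\Fix(\mc{B}_q)$, exploiting Lemma~\ref{corresb2}. First, observe that $\mc{B}_k$ preserves $\Fix(\mc{B}_q)$: if $\mc{B}_q(\alpha)=\alpha$, then $\mc{B}_q(\mc{B}_k(\alpha))=\mc{B}_{qk}(\alpha)=\mc{B}_k(\mc{B}_q(\alpha))=\mc{B}_k(\alpha)$. Since $|\Fix(\mc{B}_q)|=q^2$ by Theorem~\ref{uchib2}, permuting $\Fix(\mc{B}_q)$ is equivalent to injectivity there, and by the $\mc{B}_k$-compatibility of the bijection in Lemma~\ref{corresb2}, equivalent to $\mc{B}_k$ permuting $\F_q^2$.

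By Theorem~\ref{fixb2}, every element of $\Fix(\mc{B}_q)$ equals $\varPhi_{B_2}(\sigma,\tau)$ with $\sigma,\tau\in\tfrac{1}{N}\Z$ where $N:=q^4-1=(q-1)(q+1)(q^2+1)$, since every one of the denominators $q\pm1$ and $q^2\pm1$ divides $N$. Theorem~\ref{veselov} gives $\mc{B}_k(\varPhi_{B_2}(\sigma,\tau))=\varPhi_{B_2}(k\sigma,k\tau)$, and from the table of eight Weyl symmetries just before Theorem~\ref{fixb2}, $\varPhi_{B_2}(\sigma_1,\tau_1)=\varPhi_{B_2}(\sigma_2,\tau_2)$ if and only if $(\sigma_2,\tau_2)\equiv w(\sigma_1,\tau_1)\pmod{\Z^2}$ for some element $w$ of the Weyl group $W$.

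For sufficiency, assume $\gcd(k,N)=1$ and $\mc{B}_k(\varPhi_{B_2}(\sigma_1,\tau_1))=\mc{B}_k(\varPhi_{B_2}(\sigma_2,\tau_2))$. Then $(k\sigma_2,k\tau_2)\equiv w(k\sigma_1,k\tau_1)\pmod{\Z^2}$ for some $w\in W$. Using $\Z$-linearity of $w$, this gives $k\bigl((\sigma_2,\tau_2)-w(\sigma_1,\tau_1)\bigr)\in\Z^2$. The vector in parentheses lies in $\tfrac{1}{N}\Z^2$, so coprimality forces it into $\Z^2$, i.e.\ $\varPhi_{B_2}(\sigma_1,\tau_1)=\varPhi_{B_2}(\sigma_2,\tau_2)$. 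For necessity, let $r$ be a prime divisor of $\gcd(k,N)$, which then divides exactly one of $q-1$, $q+1$, $q^2+1$. In each case I would exhibit two inequivalent fundamental-domain representatives that collide. For instance, if $r\mid q-1$, the points $\alpha_1=\varPhi_{B_2}(0,0)$ and $\alpha_2=\varPhi_{B_2}(0,1/r)$ both lie in $\Fix(\mc{B}_q)$ (since $1/r=\tfrac{(q-1)/r}{q-1}$), they are not $W$-equivalent modulo $\Z^2$ for $r\geq2$, yet $\mc{B}_k(\alpha_2)=\varPhi_{B_2}(0,k/r)=\varPhi_{B_2}(0,0)=\mc{B}_k(\alpha_1)$. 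Parallel constructions with denominators $q+1$ and $q^2+1$, using representatives of the second family in Theorem~\ref{fixb2} for the latter, cover the remaining cases.

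The main obstacle is the necessity step: one must pick, in each of the three arithmetic cases, concrete representatives that lie in the fundamental region $R_{B_2}$, genuinely correspond to fixed points of $\mc{B}_q$ in the form prescribed by Theorem~\ref{fixb2}, and are provably inequivalent under all eight Weyl symmetries. Small-$q$ or small-$r$ edge cases, where representatives can accidentally meet a reflecting line of $R_{B_2}$ or a corner $\tilde P_i$, must be treated separately, but each is handled by perturbing one coordinate by a further multiple of $1/N$.
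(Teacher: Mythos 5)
Your proposal is correct and follows essentially the same route as the paper: both reduce the question to whether $\mc{B}_k$ permutes the explicitly described finite set $\Fix(\mc{B}_q)$ via the reduction-mod-$\p$ correspondence of Lemma~\ref{corresb2}, prove sufficiency from the coprimality of $k$ with the denominators $q\pm1$, $q^2\pm1$ appearing in Theorem~\ref{fixb2}, and prove necessity by exhibiting an explicit failure (you produce a collision, the paper produces a point missed by the image --- equivalent on a finite set). The only slip is your claim that a prime $r\mid\gcd(k,q^4-1)$ divides \emph{exactly} one of $q-1$, $q+1$, $q^2+1$ (false for $r=2$ and $q$ odd), but your construction only needs ``at least one,'' so nothing is lost.
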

\begin{proof}
The theorem is easily verified for $k=0$ and $k=1$. Suppose that $k\geq2$ and 
$\gcd(q^4-1,k)=1$. In order to see that $\bar{\mc{B}}_k(x,y)$ is a permutation 
of $\F_q^2$, it is enough to see that $\mc{B}_k$ permute $\Fix(\mc{B}_q)$. By 
Theorem~\ref{fixb2}, the set $\Fix(\mc{B}_q)$ is explicit. Its elements 
$\varPhi_{B_2}(\sigma,\tau)$ come with rational $\sigma$ and $\tau$ whose 
denominators are relatively prime to $k$. Thus the map $\mc{B}_k$ permutes 
$\Fix(\mc{B}_q)$.

To see the converse, suppose that $\gcd(q^4-1,k)\neq 1$. Then there exist an 
integer $m>1$ dividing either $q^2-1$ or $q^2+1$. It follows that either 
$\varPhi_{B_2}(1/(q^2- 1),q/(q^2-1))$ or
$\varPhi_{B_2}(1/(q^2+ 1),q/(q^2+1))$ is not in 
\begin{align*}
 \mc{B}_k(\Fix(\mc{B}_q))= &\left\{ \varPhi_{B_2}\left( \frac{kd}{q\pm1}, 
\frac{ke}{q\pm1} \right):d,e\in\Z \right\} \\
& \cup \left\{ \varPhi_{B_2}\left( 
\frac{kd}{q^2\pm1}, \frac{\pm kqd}{q^2\pm1} \right):d\in\Z \right\}.
\end{align*}
Thus $\bar{\mc{B}}_k(x,y)$ is not surjective and as a result it is not a 
permutation.
\end{proof}

Now, we give a counterexample to the conjecture posed by Lidl and Wells in 
\cite{lidlwells}. The bivariate map $\bar{\mc{B}}_{13}$ is a permutation 
of $\F_p^2$ for an infinite number of primes by Theorem~\ref{mainb2}. More 
precisely $\bar{\mc{B}}_{13}: \F_p^2\rightarrow\F_p^2$ is a permutation if and 
only if $p \not\equiv 1,5,8,12\pmod{13}$. Suppose that $\bar{\mc{B}}_{13}$ is 
a composition of linear polynomial vectors and the generalized Chebyshev 
polynomials $g(2,k,b)$ of Lidl and Wells. Each occurrence of $g(2,k,b)$ will 
put 
a restriction on $p$, see Theorem~\ref{lidlwellsmain}. However it is not 
possible to obtain the set of primes $p \not\equiv 1,5,8,12\pmod{13}$
by the conditions $\gcd(k,p^s-1)=1$ for $s=1,2,3$. Thus $\bar{\mc{B}}_{13}$ 
cannot be expressed as a composition of linear polynomial vectors and 
polynomial vectors $g(k,n,b)$ where $k$ and $b$ are various integers.

\section{The family associated with $G_2$}\label{sec:G}
We refer to \cite{lie} for a nice introduction to the theory of Lie algebras. 
Let $\{\alpha_1,\alpha_2\}$ be a choice of simple roots for the Lie algebra 
$G_2$ with Cartan matrix 
\[\left[\begin{array}{cc}2 & -1\\-3 & 2 \end{array}\right].\]
The transpose of this matrix transforms the fundamental weights into the 
fundamental roots. We have
\begin{align*}
 \alpha_1&=2\omega_1-3\omega_2,\\
 \alpha_2&=-\omega_1+2\omega_2.
\end{align*}
The function $\varPhi_{G_2}=(\varphi_1,\varphi_2)$ of Theorem~\ref{veselov} is 
obtained by the action of the Weyl group on the fundamental weights $\omega_1$ 
and $\omega_2$. The functions $\varphi_1$ and $\varphi_2$ turn out to 
be
\begin{align*}
 \varphi_1(\sigma,\tau) = &\ e^{2\pi i \sigma}+e^{2\pi i \tau}+e^{2\pi 
i(\sigma+\tau)}+e^{-2\pi i\sigma}+e^{-2\pi i \tau}+e^{-2\pi 
i(\sigma+\tau)}, \\
 \varphi_2(\sigma,\tau) = &\ e^{2\pi i (2\sigma+\tau)}+e^{2\pi i 
(\sigma+2\tau)}+e^{2\pi 
i(\sigma-\tau)} \\ &\ + e^{-2\pi i (2\sigma+\tau)}+e^{-2\pi i 
(\sigma+2\tau)}+e^{-2\pi 
i(\sigma-\tau)}. 
\end{align*}
For each $(\sigma,\tau)\in\R^2$, we can simply write
\begin{align*}
  \Phi_{G_2}(\sigma,\tau) =\ &(2\cos(2\pi \sigma)+2\cos(2\pi \tau)+2\cos(2\pi 
(\sigma+\tau)),\\&2\cos(2\pi (2\sigma+\tau)+2\cos(2\pi 
(\sigma+2\tau))+2\cos(2\pi 
(\sigma-\tau))).
\end{align*}
Hofmann and Withers call this map the generalized cosine function for the 
underlying Lie algebra \cite{hoffwith}. Theorem~\ref{veselov} implies that 
there are bivariate polynomial mappings $P_{G_2}^k$, determined from the 
conditions $\varPhi_{G_2}(k\mathbf{x}) = P_{G_2}^k(\varPhi_{G_2}(\mathbf{x}))$ 
where $\mathbf{x}=(\sigma,\tau)$. For simplicity, let us put 
\[ \mc{G}_k := P_{G_2}^k. \]
These maps satisfy the composition property $\mc{G}_{kl} = \mc{G}_{k} \circ 
\mc{G}_{l} = \mc{G}_{l} \circ \mc{G}_{k}$ by their definition. The first few 
examples of these polynomials are:
\begin{align*}
 \mc{G}_0(x,y)=&\ (6,6),\\
 \mc{G}_1(x,y)=&\ (x,y),\\
 \mc{G}_2(x,y)=&\ (x^2 - 2x + (-2y - 6), -2x^3 + (6y + 18)x + (y^2 + 10y + 
18)),\\
 \mc{G}_3(x,y)=&\ (x^3 + (-3y - 9)x + (-6y - 12),(-3y - 6)x^3 \\ &+ (9y^2 + 
45y + 54)x+ (y^3 + 18y^2 + 63y + 60)),\\
 \mc{G}_4(x,y)=&\ (x^4 + (-4y - 10)x^2 + (-4y - 8)x + (2y^2 + 8y + 6), \\
&\ 2x^6 + (-12y - 36)x^4 + (-4y^2 - 28y - 40)x^3 \\& +(18y^2 + 108y +162)x^2 + 
(12y^3 + 120y^2 + 372y + 360)x \\ &+ (y^4 + 24y^3 + 134y^2 + 280y + 198)), \\
 \mc{G}_5(x,y)=&\ (x^5 + (-5y - 15)x^3 + (-5y - 10)x^2 + (5 y^2 + 35 y + 55) x 
\\&+ (10 y^2 + 50 y + 60), (5 y + 10) x^6 + (-30 y^2 - 150 y - 180) x^4 
\\& + (-5y^3 - 65 y^2 - 205 y - 190) x^3 + (45 y^3 + 360 y^2 + 945 y + 810) x^2 
\\& + (15 y^4 + 240 y^3 + 1200 y^2 + 2415 y + 1710) x 
\\&+ (y^5 + 30 y^4 + 255 y^3+ 920 y^2 + 1495 y + 900)).
\end{align*}
There is a recurrence relation satisfied by these maps from which it is 
straightforward to calculate further $\mc{G}_k$ \cite{withers}. If 
$\mc{G}_k=(f_k,g_k)$, then
\begin{align*}
 f_{k+6}=&\ x(f_{k+5}+f_{k+1})-(x+y+3)(f_{k+4}+f_{k+2})\\
 &+(x^2-2y-4)f_{k+3}-f_k, \\
 g_{k+6}=&\ y(g_{k+5}+g_{k+1})-(x^3-3xy-9x-5y-9)(g_{k+4}+g_{k+2})\\
 &+(y^2-2x^3+6xy+18x+12y+8)g_{k+3}-g_k.
\end{align*}

Let $q$ be a power of a prime $p$. Note that $\mc{G}_q \equiv (x^q,y^q) 
\pmod{p}$ for $q=2,3,4$ and $5$. We will show that this is true in general.

Let $\phi(t_1,t_2,t_3)=(t_1+1/t_1,t_2+1/t_2,t_3+1/t_3)$ and $\psi=(\sigma_1, 
\sigma_2, \sigma_3)$ where $\sigma_i$ is the $i$th elementary symmetric 
function. There exists $G_k(x,y,z)$ so that the following diagram commutes:
\begin{center}
\begin{tikzpicture}
  \matrix (m) [matrix of math nodes,row sep=1.5em,column sep=18em,minimum 
width=2em] {
     {\C^*}^3 & {\C^*}^3 \\
     \C^3 & \C^3 \\
     \C^3 & \C^3 \\};
  \path[-stealth]
    (m-1-1) edge node [left] {$\phi$} (m-2-1)
            edge node [above] 
{$(t_1,t_2,t_3)\mapsto(t_1^k,t_2^k,t_3^k)$} (m-1-2)
    (m-2-1) edge node [left] {$\psi$} (m-3-1)
    edge node [above] {$(u_1,u_2,u_3)\mapsto(D_k(u_1),D_k(u_2),D_k(u_3))$} 
(m-2-2)
    (m-1-2) edge node [right] {$\phi$} (m-2-2)
    (m-3-1) edge node [above] {$(x,y,z)\mapsto G_k(x,y,z)$} (m-3-2)
    (m-2-2) edge node [right] {$\psi$} (m-3-2);
\end{tikzpicture}\end{center} 
Commutativity of the upper part follows from the definition of Dickson 
polynomials. The existence of $G_k$ and the fact that each component of $G_k$ is 
in $\Z[x,y,z]$ follows from the fundamental theorem on symmetric polynomials.

\begin{lemma}
 If $t_1t_2t_3=1$ and $\psi(\phi(t_1,t_2,t_3))=(x,y,z)$, then $z=x^2-2y-4$.
\end{lemma}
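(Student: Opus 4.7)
The plan is to translate the claimed identity $z = x^2 - 2y - 4$ into a symmetric identity in the variables $u_i = t_i + 1/t_i$, and then verify it by a direct expansion that uses the constraint $t_1 t_2 t_3 = 1$ in a single clean way.

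First, by definition $x = u_1 + u_2 + u_3$, $y = u_1 u_2 + u_1 u_3 + u_2 u_3$ and $z = u_1 u_2 u_3$. Newton's identity for power sums gives $u_1^2 + u_2^2 + u_3^2 = x^2 - 2y$, so the claim $z = x^2 - 2y - 4$ is equivalent to the compact identity
\[
u_1 u_2 u_3 + 4 \;=\; u_1^2 + u_2^2 + u_3^2.
\]
This is the form I would actually prove.

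Second, I would use the constraint $t_1 t_2 t_3 = 1$ in the form $1/t_i = t_j t_k$ for $\{i,j,k\} = \{1,2,3\}$. This turns $u_i$ into $t_i + t_j t_k$, and expanding $u_1 u_2 u_3 = (t_1 + t_2 t_3)(t_2 + t_1 t_3)(t_3 + t_1 t_2)$ with repeated use of $t_1 t_2 t_3 = 1$ should collapse all mixed monomials into a symmetric expression, yielding
\[
u_1 u_2 u_3 \;=\; 2 + (t_1^2 + t_2^2 + t_3^2) + (t_1^2 t_2^2 + t_1^2 t_3^2 + t_2^2 t_3^2).
\]
On the other hand, $u_i^2 = t_i^2 + 2 + 1/t_i^2 = t_i^2 + 2 + t_j^2 t_k^2$, so summing gives
\[
u_1^2 + u_2^2 + u_3^2 \;=\; 6 + (t_1^2 + t_2^2 + t_3^2) + (t_1^2 t_2^2 + t_1^2 t_3^2 + t_2^2 t_3^2).
\]
Subtracting the two displays yields $u_1^2 + u_2^2 + u_3^2 - u_1 u_2 u_3 = 4$, which is the identity we reduced to. No step here is an obstacle; the only thing to keep straight is the bookkeeping in the triple-product expansion, which is why rephrasing via Newton's identity is worthwhile before touching the $t_i$'s directly.
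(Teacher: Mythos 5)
Your proof is correct. Every step checks out: $x^2-2y=u_1^2+u_2^2+u_3^2$ is the standard Newton identity for the $u_i$'s, and the two expansions are right — expanding $u_1u_2u_3=\prod_i(t_i+1/t_i)$ into the eight monomials $t_1^{\epsilon_1}t_2^{\epsilon_2}t_3^{\epsilon_3}$ and using $1/t_i=t_jt_k$ gives exactly $2+\sum t_i^2+\sum t_i^2t_j^2$, while $\sum u_i^2=6+\sum t_i^2+\sum t_i^2t_j^2$, so the difference is $4$ as claimed. The paper's proof is also a direct computation but organized differently: it eliminates $t_3$ by substituting $t_3=1/(t_1t_2)$, writes $x$, $y$, $z$ explicitly as rational functions of $t_1,t_2$ alone, and then verifies $z=x^2-2y-4$ by brute force on those expressions. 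Your route preserves the three-fold symmetry throughout and isolates the content of the lemma in the clean identity $u_1u_2u_3+4=u_1^2+u_2^2+u_3^2$, which makes the bookkeeping lighter and the role of the constraint $t_1t_2t_3=1$ (namely $1/t_i=t_jt_k$) more transparent; the paper's version requires no preliminary reduction but trades that for an opaque two-variable rational-function verification. Either is a complete proof.
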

\begin{proof}
 The proof is by direct computation. Put $t_3=1/(t_1t_2)$. We have
 \begin{align*}
  x&=\frac{(t_2^2 + t_2)t_1^2 + (t_2^2 + 1)t_1 + t_2 + 1}{t_1t_2}\\
  y&=\frac{t_2^3t_1^4 + (t_2^4 + t_2^3 + t_2^2 + t_2)t_1^3 + (t_2^3 + t_2)t_1^2 
+ (t_2^3 + t_2^2 + t_2 + 1)t_1 + t_2}{t_2^2t_1^2}\\
 z&=\frac{(t_2^4 + t_2^2)t_1^4 + (t_2^4 + 2t_2^2 + 1)t_1^2 + t_2^2 + 
1}{t_2^2t_1^2}.
\end{align*}
One can verify that $z=x^2-2y-4$.
\end{proof}

Define $V=\{ (x,y,z)\in\C^3: z=x^2-2y-4\}$. The map $G_k$ induces a map on $V$ 
because $t_1t_2t_3=1$ implies that $t_1^kt_2^kt_3^k=1$. Let $\pi: V \rightarrow 
\C^2$ be the projection to the first two components, i.e. $\pi(x_1,x_2,x_3) = 
(x_1,x_2)$. We define the bivariate map $\tilde{\mc{G}}_k$ by 
the following commutative diagram.
\begin{center}
\begin{tikzpicture}
  \matrix (m) [matrix of math nodes,row sep=1.5em,column sep=14em,minimum 
width=2em] {
     V & V \\
     \C^2 & \C^2 \\};
  \path[-stealth]
(m-1-1) edge node [left] {$\pi$} (m-2-1)
        edge node [above] {$(x,y,z)\mapsto G_k(x,y,z)$} (m-1-2)
(m-2-1) edge node [above] {$(x,y)\mapsto \tilde{\mc{G}}_k(x,y)$} (m-2-2)
(m-1-2) edge node [right] {$\pi$} (m-2-2);
\end{tikzpicture}\end{center} 
A formula for $\tilde{\mc{G}}_k(x,y)$ is obtained by replacing $z$ with 
$x^2-2y-4$ in the first two components of $G_k$. More precisely 
$\tilde{\mc{G}}_k(x,y)= \pi(G_k(x,y,x^2-2y-4))$. For example 
\[ \tilde{\mc{G}}_k(x,y)=(x^2 + (-2y - 6), -2x^3 - 4x^2 + (4y + 8)x + 
(y^2 + 8y + 12)). \]
The bivariate maps $\tilde{\mc{G}}_k$ and $\mc{G}_k$ are conjugates to each 
other. To see this relation, let us consider
\begin{align*}
a &=e^{2\pi i \sigma}+e^{-2\pi i \sigma},\\ b&=e^{2\pi i 
\tau}+e^{-2\pi i \tau},\\ c&=e^{2\pi i (\sigma+\tau)}+e^{-2\pi i (\sigma+\tau)}.
\end{align*}
Then $\varphi_1=a+b+c$ and $\varphi_2=ab+ac+bc-a-b-c$. In other words 
$\varphi_1=\sigma_1$ and $\varphi_2=\sigma_2-\sigma_1$. Let 
$L:(x,y)\mapsto(x,y-x)$. We have
\[\tilde{\mc{G}}_k=L\circ\mc{G}_k\circ L^{-1}.\]
Thus $\tilde{\mc{G}}_k$ and $\mc{G}_k$ are conjugate to each other by the linear 
map $L$. Now, we prove a lemma that is key to obtain the correspondence between 
$\Fix(\mc{G}_q)$ and $\F_q^2$.

\begin{lemma}
 Let $q$ be a power of a prime $p$. Then
 \begin{enumerate}
  \item $\tilde{\mc{G}}_q(x,y)=(x^q,y^q) \pmod{p}$,
  \item $\mc{G}_q(x,y)=(x^q,y^q) \pmod{p}$.
 \end{enumerate}
\end{lemma}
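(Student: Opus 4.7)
The plan is to mirror the strategy used for $\mc{B}_q$ in Lemma~\ref{lemmab2}, lifting to the three-variable map $G_k$ via the commutative diagram, and then projecting back down. I will prove (1) first, and then deduce (2) from (1) using the conjugacy $\tilde{\mc{G}}_k = L\circ \mc{G}_k\circ L^{-1}$.

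For (1), I would first establish the analog of Lemma~\ref{lemmab2} for $G_k$ on all of $\C^3$, namely that
\[ G_q(x,y,z)\equiv (x^q,y^q,z^q)\pmod{p}. \]
This is done exactly as in the $B_2$ argument: the formula $D_q(x)\equiv x^q\pmod{p}$ gives $(D_q(u_1),D_q(u_2),D_q(u_3))\equiv(u_1^q,u_2^q,u_3^q)\pmod p$, and then commutativity of the displayed diagram combined with the fact that the elementary symmetric polynomials satisfy $\sigma_i(u_1^q,u_2^q,u_3^q)\equiv \sigma_i(u_1,u_2,u_3)^q\pmod p$ (because Frobenius is a ring homomorphism mod $p$, applied to the identity expressing $\sigma_i$ of the $q$-th powers in terms of the $\sigma_j$) yields $G_q(\psi(\phi(t_1,t_2,t_3)))\equiv \psi(\phi(t_1,t_2,t_3))^q\pmod p$ generically, hence as a polynomial identity mod $p$.

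To pass from $G_q$ to $\tilde{\mc{G}}_q$, I substitute $z=x^2-2y-4$ and project:
\[ \tilde{\mc{G}}_q(x,y)=\pi(G_q(x,y,x^2-2y-4))\equiv \pi(x^q,y^q,(x^2-2y-4)^q)=(x^q,y^q)\pmod p, \]
which proves (1). The one thing to notice here is that the substitution $z\mapsto x^2-2y-4$ is compatible with reduction mod $p$ (it has integer coefficients), so the congruence for $G_q$ descends to the congruence for $\tilde{\mc{G}}_q$ without issue.

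For (2), I use the relation $\mc{G}_q=L^{-1}\circ \tilde{\mc{G}}_q\circ L$ with $L(x,y)=(x,y-x)$ and $L^{-1}(x,y)=(x,y+x)$. Applying (1):
\[ \mc{G}_q(x,y)=L^{-1}(\tilde{\mc{G}}_q(x,y-x))\equiv L^{-1}(x^q,(y-x)^q)=(x^q,(y-x)^q+x^q)\pmod p. \]
Since $q$ is a power of $p$, Frobenius gives $(y-x)^q\equiv y^q-x^q\pmod p$, so the second coordinate is $y^q$, completing the proof. I do not expect any real obstacle here; the proof is essentially routine once the three-variable diagram is in place, the only subtlety being to confirm that the linear conjugacy $L$ interacts correctly with Frobenius, which it does precisely because $L$ has integer (hence Frobenius-invariant) coefficients.
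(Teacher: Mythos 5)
Your proof is correct and follows essentially the same route as the paper: establish the congruence for the three-variable map $G_q$ via the commutative diagram and $D_q(x)\equiv x^q \pmod{p}$, substitute $z=x^2-2y-4$ and project to obtain (1), then conjugate by $L$ and use Frobenius on $(y-x)^q$ to obtain (2). The only (harmless) divergence is in how the three-variable congruence $G_q(x,y,z)\equiv(x^q,y^q,z^q)\pmod{p}$ is verified: you use the identity $\sigma_i(u_1^q,u_2^q,u_3^q)\equiv\sigma_i(u_1,u_2,u_3)^q\pmod{p}$, exactly as the paper does for $\mc{B}_q$ in Lemma~\ref{lemmab2}, whereas the paper here routes through the intermediate map $F_q$ and cites the explicit Lidl--Wells coefficient formulas; both verifications are valid.
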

\begin{proof}
It is enough to prove the first assertion because if that is the case then we 
have
\begin{align*}
 \mc{G}_q(x,y) &\equiv (L^{-1} \circ \tilde{\mc{G}}_k \circ L) (x,y) \pmod{p} \\
 &\equiv L^{-1}(x^q,(y-x)^q) \pmod{p} \\
 &\equiv (x^q,y^q) \pmod{p}.
\end{align*}

By the fundamental theorem on symmetric polynomials, there exists a function 
$F_k\in\Z[x,y,z]$ such that the following diagram commutes:
\begin{center}
\begin{tikzpicture}
  \matrix (m) [matrix of math nodes,row sep=1.5em,column sep=14em,minimum 
width=2em] {
     {\C^*}^3 & {\C^*}^3 \\
     \C^3 & \C^3 \\};
  \path[-stealth]
(m-1-1) edge node [left] {$\psi$} (m-2-1)
        edge node [above] {$(t_1,t_2,t_3)\mapsto(t_1^k,t_2^k,t_3^k)$} (m-1-2)
(m-2-1) edge node [above] {$(x,y,z)\mapsto F_k(x,y,z)$} (m-2-2)
(m-1-2) edge node [right] {$\psi$} (m-2-2);
\end{tikzpicture}\end{center} 
For example $F_2(x,y,z)=(x^2-2y,y^2-2xz,z^2)$. Recall that $D_q(x) = x^q 
\pmod{p}$. It follows that $G_q(x,y,z) =F_q(x,y,z) \pmod{p}$. Let $\pi_1$ and 
$\pi_2$ be the projections to the first and second components, respectively. 
Lidl and Wells provide explicit formulas for $\pi_1(F_k)$ and $\pi_2(F_k)$. 
More precisely
\begin{align*}
 \pi_1(F_k)&=\sum_{i=0}^{\lfloor k/2\rfloor} \sum_{j=0}^{\lfloor k/3\rfloor} 
\frac{k(-1)^i}{k-i-2j}\binom{k-i-2j}{i+j}  \binom{i+j}{i}x^{k-2i-3j}y^iz^j \\
 \pi_2(F_k)&=\sum_{i=0}^{\lfloor k/2\rfloor} \sum_{j=0}^{\lfloor k/3\rfloor} 
\frac{k(-1)^j}{k-i-2j}\binom{k-i-2j}{i+j}  \binom{i+j}{i}x^iy^{k-2i-3j}z^{i+2j}
\end{align*}
where only those terms occur for which $k\geq 2i+3j$ \cite{lidlwells}. It 
easily follows from these formulas that $\pi(F_q(x,y,x^2-2y-4))=(x^q,y^q) 
\pmod{p}$. Therefore $\tilde{\mc{G}}_q(x,y)=\pi(G_k(x,y,x^2-2y-4))=(x^q,y^q) 
\pmod{p}$.
\end{proof}

Let $k\geq 2$ be an integer. Similar to the set $\Delta_{B_2}$, the set of 
points with bounded orbits under $\mc{G}_k$ is the set
\[\Delta_{G_2}=\{\varPhi_{G_2}(\sigma,\tau):\sigma,\tau\in\R\}.\]
As a result, a point that is fixed under $\mc{G}_k:\C^2 \rightarrow \C^2$ is of 
the form $\varPhi_{G_2}(\sigma,\tau)$ for some $\sigma,\tau\in \R$. The set 
$\Delta_{G_2}$, which is shown in Fig.~\ref{fig:dg2}, is contained in $\R^2$.
There are three corner points, namely $Q_0=(6,6), Q_1 = (-3,6)$ and 
$Q_2=(-2,-2)$. The region $\Delta_{G_2}$ is enclosed by the singular cubic curve 
$(y+6x+12)^2 = 4(x+3)^3$ and the parabola $4y=x^2-12$. The node of the singular 
cubic curve is at $Q_1$. 

\begin{figure}[htbp]
    \centering
 \includegraphics[scale=0.7]{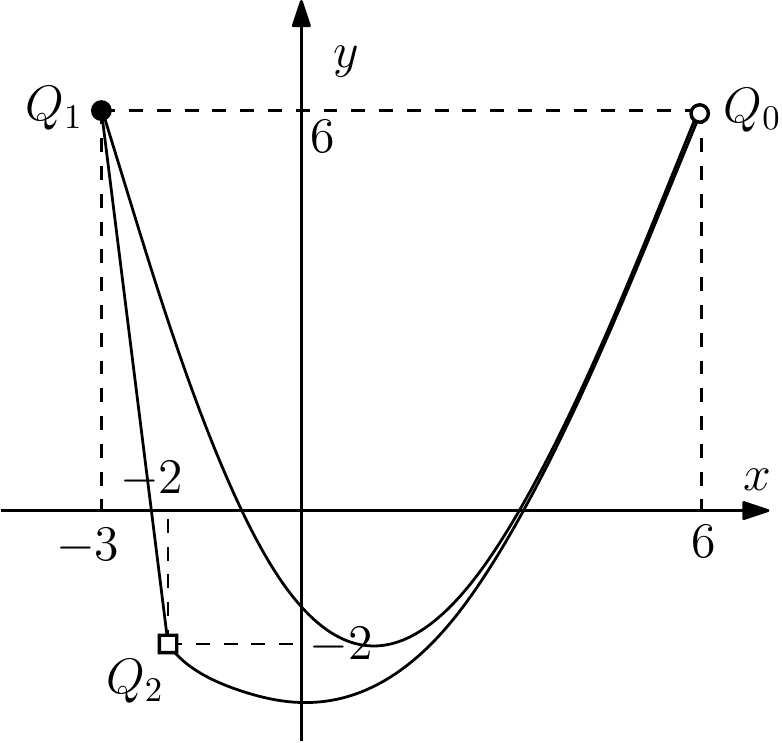}
    \caption{The set $\Delta_{G_2}$.}
    \label{fig:dg2}
\end{figure}

We want to find a fundamental region in $\sigma\tau$-plane whose elements are 
in one-to-one correspondence with the elements of $\Delta_{G_2}$ under 
$\varPhi_{G_2}$. If $(\sigma,\tau) \equiv (\sigma',\tau')\pmod{\Z^2}$, 
then it is easy to see that $\varPhi_{G_2}(\sigma,\tau) = 
\varPhi_{G_2}(\sigma',\tau')$. Thus it is enough to consider $0 \leq \sigma,\tau 
\leq 1$ to obtain any point in $\Delta_{G_2}$ under the map $\varPhi_{G_2}$. 
Moreover there are extra symmetries coming from the action of the Weyl group. 
Observe that $\varPhi_{G_2}(\sigma,\tau)$ is equal to any one of the following 
twelve expressions:
\[\begin{array}{ll|ll|ll}
\text{I} &\varPhi_{G_2}(\sigma,\tau) &\text{V} 
&\varPhi_{G_2}(\sigma,-\sigma-\tau) &\text{IX} 
& \varPhi_{G_2}(\tau,-\sigma-\tau)\\
\text{II} &\varPhi_{G_2}(\tau,\sigma) &\text{VI} 
&\varPhi_{G_2}(-\sigma-\tau,\sigma) 
&\text{X} &\varPhi_{G_2}(-\sigma-\tau,\tau)\\
\text{III} &\varPhi_{G_2}(-\sigma,-\tau) &\text{VII} 
&\varPhi_{G_2}(-\sigma,\sigma+\tau) & 
\text{XI} &\varPhi_{G_2}(-\tau,\sigma+\tau)\\
\text{IV} &\varPhi_{G_2}(-\tau,-\sigma) &\text{VIII} 
&\varPhi_{G_2}(\sigma+\tau,-\sigma) 
&\text{XII} &\varPhi_{G_2}(\sigma+\tau,-\tau)
\end{array}\]

Under these symmetries, the square $0 \leq \sigma,\tau \leq 1$ can be separated 
into twelve subtriangles. This is shown in Fig.~\ref{fig:RG2}. Define
\[ R_{G_2}=\{ (\sigma,\tau)\in\R^2 \mathrel| 0 \leq \sigma \leq 1/3 
\text{ and } \sigma \leq \tau \leq (1-\sigma)/2  \}. \]
Note that the restricted function $\varPhi_{G_2}:R_{G_2} \rightarrow 
\Delta_{G_2}$ is one-to-one and onto. Set $\tilde{Q}_0=(0,0), 
\tilde{Q}_1=(1/3,1/3)$ and $\tilde{Q}_2=(0,1/2)$. Then 
$\varPhi_{G_2}(\tilde{Q}_i)=Q_i$ for each $i\in\{1,2,3\}$. This correspondence 
(and more) is symbolized by the use of different marks, such as circles, disks 
and square. 

\begin{figure}[htbp]
    \centering
 \includegraphics[scale=0.80]{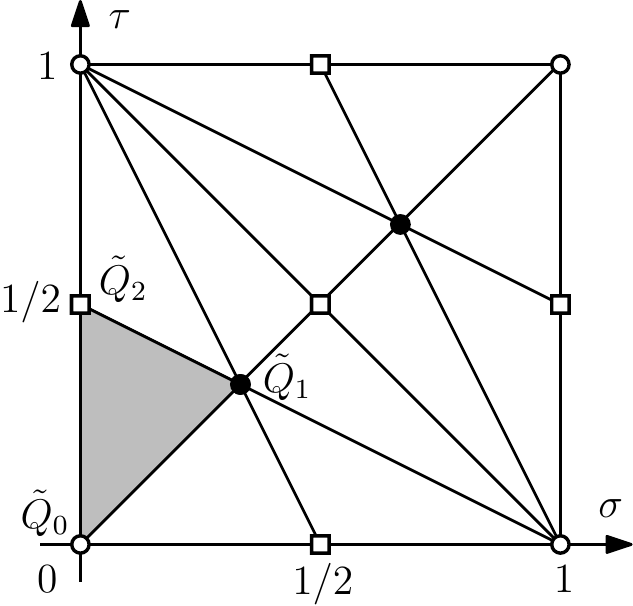}
    \caption{The fundamental region $R_{G_2}$.}
    \label{fig:RG2}
\end{figure}

Now, we are ready analyze the set of fixed points under the bivariate map 
$\mc{G}_k$. Let $k \geq 2$ be a fixed integer. Let $\alpha = 
\varPhi_{G_2}(\sigma,\tau)$ be a fixed point under $\mc{G}_k$. We want to 
determine the values of $\sigma$ and $\tau$ such that 
$\mc{G}_k(\varPhi_{G_2}(\sigma,\tau)) = \varPhi_{G_2}(k\sigma,k\tau) = 
\varPhi_{G_2} (\sigma , \tau)$. 
There are twelve possibilities. For example, let us consider 
$(k\sigma,k\tau) \equiv (-\sigma-\tau,\sigma)$ modulo $\Z^2$. This is the case 
VI. We have 
\[k^2\sigma \equiv -k\sigma -k\tau \equiv -k\sigma -\sigma \pmod{\Z}. \]
It follows that $\sigma=d/(k^2+k+1)$ for some integer $d$. Since $k\sigma 
\equiv -\sigma-\tau \pmod{\Z}$, we have $\tau\equiv-(k+1)\sigma \pmod{\Z}$. 
Thus
\[ \alpha = \varPhi_{G_2}\left( \frac{d}{k^2+k+1}, \frac{-(k+1)d}{k^2+k+1} 
\right)= \varPhi_{G_2}\left( \frac{d}{k^2+k+1}, \frac{kd}{k^2+k+1} 
\right).\]
Here, the second equality follows from $d+kd-(k+1)d\equiv 0 
\pmod{k^2+k+1}$ and the definition of $\varPhi_{G_2}$. In general, a fixed 
point $\alpha$ fits into one of the following sets:
\begin{align*}
 S_1 &= \left\{ \varPhi_{G_2}\left( \frac{d}{k-1}, \frac{e}{k-1} 
\right):d,e\in\Z \right\},\\
 S_2 &= \left\{ \varPhi_{G_2}\left( \frac{d}{k^2-1}, \frac{dk}{k^2-1} 
\right):d\in\Z \right\},\\
 S_3 &= \left\{ \varPhi_{G_2}\left( \frac{d}{k^2+k+1}, \frac{dk}{k^2+k+1} 
\right):d\in\Z \right\},\\
 S_4 &= \left\{ \varPhi_{G_2}\left( \frac{d}{k+1}, \frac{e}{k+1} 
\right):d,e\in\Z \right\},\\
 S_5 &= \left\{ \varPhi_{G_2}\left( \frac{d}{k^2-1}, \frac{d(-k)}{k^2-1} 
\right):d\in\Z \right\},\\
 S_6 &= \left\{ \varPhi_{G_2}\left( \frac{d}{k^2-k+1}, \frac{d(-k)}{k^2-k+1} 
\right):d\in\Z \right\}.
\end{align*}
The computation above shows that the fixed points of type VI takes place in 
$S_3$. One can do similar computations for the other cases and obtain the 
following table which gives the correspondence between the sets $S_i$ and 
the types of symmetries.
\[\begin{array}{|c|c|c|c|c|c|} \hline
S_1 & S_2 & S_3 & S_4 & S_5 & S_6\\ \hline
\text{I} & \text{II, V} & \text{VI, IX, X } & \text{III } & \text{IV, VII} & 
\text{VIII,XI,XII}\\ \hline
\end{array}\]

\begin{theorem}\label{fixg2}
Let $k \geq 2$ be a fixed integer. Then 
\[ \Fix(\mc{G}_k)=S_1 \cup S_2 \cup S_3 \cup S_4 \cup S_5 \cup S_6.\]
\end{theorem}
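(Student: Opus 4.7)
The plan is to mimic the preliminary computation done for case VI just before the theorem statement and to systematically run through all twelve Weyl-group symmetries listed in the table of expressions equal to $\varPhi_{G_2}(\sigma,\tau)$. A point $\alpha = \varPhi_{G_2}(\sigma,\tau)$ is fixed by $\mc{G}_k$ exactly when $\varPhi_{G_2}(k\sigma,k\tau)=\varPhi_{G_2}(\sigma,\tau)$, and since $\varPhi_{G_2}$ is invariant under $\Z^2$-translation and under the twelve Weyl symmetries, this is equivalent to the existence of some Weyl element $w$ with $(k\sigma,k\tau)\equiv w\cdot(\sigma,\tau)\pmod{\Z^2}$. The forward inclusion therefore reduces to solving twelve linear congruences of the shape $(kI-w)(\sigma,\tau)^T\equiv 0\pmod{\Z^2}$, one per symmetry.

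The excerpt disposes of case VI and obtains $S_3$. For the other eleven I would proceed analogously. Cases I and III, with $w=\pm I$, give $S_1$ and $S_4$ immediately from the relations $(k\mp 1)\sigma\equiv 0$ and $(k\mp 1)\tau\equiv 0$. The swap-type cases II and IV produce $k^2\sigma\equiv\pm\sigma\pmod\Z$, so the denominator $k^2-1$ appears and one reads off $S_2$ and $S_5$. The seven remaining shear cases V, VII, VIII, IX, X, XI, XII each give a system of the form $(k\mp 1)\sigma\equiv 0$ paired with a mixed congruence relating $\sigma$ and $\tau$, whose two-parameter solution set, after applying one of the twelve Weyl symmetries to $(\sigma,\tau)$, collapses to the one-parameter form listed in $S_2$, $S_3$, $S_5$, or $S_6$ according to the table preceding the theorem. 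The reverse inclusion is immediate: for each $S_i$, direct substitution verifies the corresponding congruence $(k\sigma,k\tau)\equiv w\cdot(\sigma,\tau)\pmod{\Z^2}$ for the advertised Weyl element $w$, which in turn forces $\varPhi_{G_2}(\sigma,\tau)\in\Fix(\mc{G}_k)$.

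The main obstacle is not conceptual but organizational. In each of the seven shear cases one must keep careful track of the two linear congruences and then identify precisely which Weyl element compresses the resulting two-parameter solution into the one-parameter shape of $S_2$, $S_3$, $S_5$, or $S_6$. The grouping advertised in the table before the theorem serves as a roadmap, and once one shear case is carried out in full, as done in the excerpt for case VI, the remaining six are rote linear algebra over $\Z$ that fit the same template.
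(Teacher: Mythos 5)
Your proposal is correct and follows essentially the same route as the paper, which likewise reduces the fixed-point condition to the twelve congruences $(k\sigma,k\tau)\equiv w\cdot(\sigma,\tau)\pmod{\Z^2}$, works out case VI in full, and asserts that the remaining cases yield the table grouping the symmetries into $S_1,\ldots,S_6$. The only quibble is cosmetic: in the shear cases the two coupled congruences already cut out a one-parameter family, and the Weyl symmetry is used merely to rewrite it in the normalized form of the listed $S_i$, but this does not affect the validity of the argument.
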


Note that the union $\cup S_i$ is not disjoint. For example the point $Q_0$ is 
contained in each $S_i$. The following theorem gives the cardinality of the set 
of points fixed under $\mc{G}_k$.
\begin{theorem}
Let $k\geq2$ be a fixed integer. Then $|\Fix(\mc{G}_k)|=k^2$.
\end{theorem}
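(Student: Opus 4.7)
The strategy is to adapt Uchimura's argument used in Theorem~\ref{uchib2} to the triangular fundamental region $R_{G_2}$. Since $\Fix(\mc{G}_k)$ is contained in $\Delta_{G_2}$, and $\varPhi_{G_2}:R_{G_2}\rightarrow\Delta_{G_2}$ is a bijection, it is enough to count the points $(\sigma,\tau)\in R_{G_2}$ for which $(k\sigma,k\tau)$ is congruent to $(\sigma,\tau)$ modulo the combined action of $\Z^2$ together with the twelve Weyl symmetries listed above.

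The first step is to subdivide $R_{G_2}$ into exactly $k^2$ closed subtriangles $T_1,\ldots,T_{k^2}$, each similar to $R_{G_2}$ with linear ratio $1/k$, such that on each $T_j$ the map $(\sigma,\tau)\mapsto(k\sigma,k\tau)$, followed by the unique affine Weyl symmetry and $\Z^2$-translation that brings the image back into $R_{G_2}$, is an affine bijection $T_j\rightarrow R_{G_2}$. A dimension count supports the existence of this subdivision: the area of $R_{G_2}$ with vertices $\tilde{Q}_0,\tilde{Q}_1,\tilde{Q}_2$ is $1/12$, while multiplication by $k$ expands area by $k^2$ and the square $[0,1]^2$ tiles into $12$ fundamental regions under the Weyl symmetries I--XII. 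The picture will be the $G_2$ analogue of Figure~\ref{fig:divb23}, with the two small cases $k=2,3$ drawn explicitly to pin down the tiling pattern.

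The second step invokes a standard fixed point argument: the inverse of each affine bijection $T_j\rightarrow R_{G_2}$ is a contraction of $R_{G_2}$ into itself, so by Brouwer (or simply by solving the linear system) it has exactly one fixed point $x_j\in T_j$. Pulling back through $\varPhi_{G_2}$, this produces $k^2$ candidates for elements of $\Fix(\mc{G}_k)$, one per subtriangle.

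The main obstacle, and the step that really requires care, is verifying that no two of the $x_j$ coincide, since the only possible overlap lies on the shared boundaries of the $T_j$. As in the $B_2$ case, such boundaries are mapped under multiplication by $k$ onto the boundary of $R_{G_2}$, so a coincidence can occur only at points that are both interior-to-an-edge fixed points of several $T_j$'s or at the corners $\tilde{Q}_0,\tilde{Q}_1,\tilde{Q}_2$. The corners $\tilde{Q}_0=(0,0)$, $\tilde{Q}_1=(1/3,1/3)$ and $\tilde{Q}_2=(0,1/2)$ are fixed under $\mc{G}_k$ only in specific residue classes of $k$ modulo $3$ and $2$ respectively (using the Weyl symmetries to identify $k\tilde{Q}_i$ with $\tilde{Q}_i$), and in each such case the corresponding corner lies in exactly one of the subtriangles $T_j$. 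A case analysis modulo $6$, together with the observation that division points along the edges of $R_{G_2}$ that fall on boundaries between subtriangles are sent to boundary corners, shows that the would-be repetitions are exactly accounted for by the number of $T_j$ meeting at those corners, so the count of distinct fixed points remains $k^2$. This completes the plan.
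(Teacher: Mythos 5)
Your proposal follows the same Uchimura-style argument as the paper: subdivide $R_{G_2}$ into $k^2$ subtriangles each carried onto $R_{G_2}$ by multiplication by $k$ (composed with a Weyl symmetry and translation), extract one fixed point per subtriangle from the affine contraction, and rule out coincidences on shared boundaries via the corner analysis ($\tilde{Q}_1$ fixed iff $3\nmid k$, $\tilde{Q}_2$ fixed iff $k$ is odd). The paper's proof is essentially identical, deferring the boundary bookkeeping to the proof of Theorem~\ref{uchib2}, so no substantive difference to report.
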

\begin{proof}
We follow the idea of Uchimura \cite{uchimura}. The fundamental region 
$R_{G_2}$ is a closed bounded domain. Divide $R_{G_2}$ into $k^2$ subtriangles 
$T_1,\ldots,T_{k^2}$ such that each one of them is mapped onto $R_{G_2}$ under 
the multiplication by $k$. This is illustrated for $k=2$ and $k=3$ in 
Figure~\ref{fig:divg23}. 

\begin{figure}[htbp]
    \centering
 \includegraphics[scale=0.75]{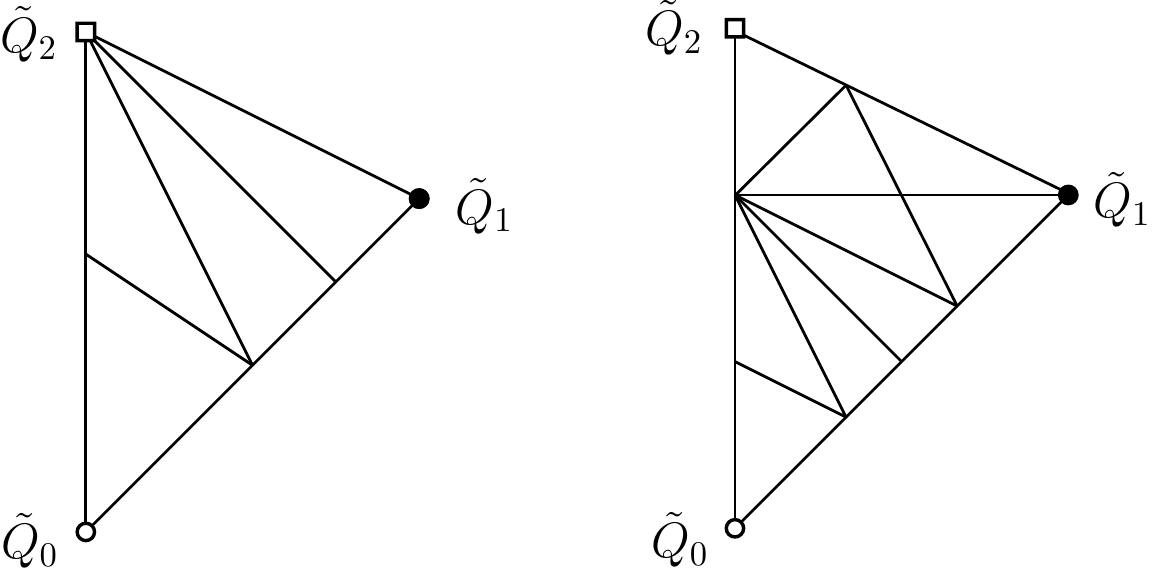}
    \caption{The subtriangles $T_j$ of $R_{G_2}$ for $k=2$ and $k=3$.}
    \label{fig:divg23}
\end{figure}

Note that $Q_1$ is fixed under $\mc{G}_k$ if and only if $k$ is not divisible 
by $3$. In that case $\tilde{Q}_1$ is contained in only one of the subtriangles 
$T_j$. Similarly $Q_2$ is fixed under $\mc{G}_k$ if and only if $k$ is odd. In 
such a case, the point $\tilde{Q}_2$ is contained in only one of the 
subtriangles $T_j$ as well. The rest of the proof is as the same as the proof 
of Theorem~\ref{uchib2}.
\end{proof}

The proof of the following lemma is similar to the proof of 
Lemma~\ref{corresb2} and omitted. 
\begin{lemma}
Let $q$ be a power a prime $p$ and let $k\geq 2$ be a fixed integer. Consider 
the cyclotomic number field $K=\Q(\zeta_{q^6-1})$ which contain the coordinates 
of the points fixed under $\mc{G}_k$. Let $\mf{p}$ be a prime ideal of $K$ 
lying over $p$. Then there exists a one-to-one correspondence
\[ \Fix(\mc{G}_q) \longleftrightarrow \F_q^2 \]
which is given by the reduction modulo $\mf{p}$.
\end{lemma}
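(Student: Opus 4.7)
The argument will closely mirror the proof of Lemma~\ref{corresb2}. First, I would verify that every fixed point of $\mc{G}_q$ has coordinates lying in $\oo_K$. By Theorem~\ref{fixg2}, any element of $\Fix(\mc{G}_q)$ has the form $\varPhi_{G_2}(\sigma,\tau)$ with rational $\sigma,\tau$ whose denominators divide one of $q-1$, $q+1$, $q^2-1$, $q^2+q+1$, $q^2-q+1$. Since
\[ q^6-1 = (q-1)(q+1)(q^2+q+1)(q^2-q+1) \]
and $q^2-1\mid q^6-1$, each of these denominators divides $q^6-1$. Hence every exponential appearing in $\varphi_1$ and $\varphi_2$ — namely $e^{2\pi i\sigma}$, $e^{2\pi i\tau}$, $e^{2\pi i(\sigma+\tau)}$, $e^{2\pi i(2\sigma+\tau)}$, $e^{2\pi i(\sigma+2\tau)}$, $e^{2\pi i(\sigma-\tau)}$ — is a $(q^6-1)$-th root of unity and therefore lies in $K=\Q(\zeta_{q^6-1})$. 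Consequently the coordinates of any fixed point are algebraic integers of $K$, and reduction modulo $\mf{p}$ makes sense.

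Next, I would invoke the preceding lemma to get $\mc{G}_q(x,y)\equiv(x^q,y^q)\pmod p$. For $(x,y)\in\Fix(\mc{G}_q)$, reducing the identity $\mc{G}_q(x,y)=(x,y)$ modulo $\mf{p}$ gives $(\bar x^q,\bar y^q)=(\bar x,\bar y)$, so $(\bar x,\bar y)$ is a zero of the separable polynomials $X^q-X$ and $Y^q-Y$ and hence lies in $\F_q^2$. This defines the candidate reduction map $\rho:\Fix(\mc{G}_q)\to\F_q^2$.

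Finally, I would conclude by counting. Both sides have cardinality $q^2$ (by the theorem preceding this lemma and by $|\F_q^2|=q^2$), so it suffices to show that $\rho$ is injective. This will be the main obstacle: a priori, two distinct algebraic integers in $\oo_K$ can collapse modulo $\mf{p}$. The way around it is to exploit separability of the reduced system. Over the residue field $\oo_K/\mf{p}$, the equations $\bar x^q-\bar x=0$, $\bar y^q-\bar y=0$ have exactly $q^2$ simple (transverse) roots; if two of the $q^2$ characteristic-zero fixed points had the same image under $\rho$, the scheme of zeros of $\mc{G}_q(x,y)-(x,y)$ would acquire a multiple point modulo $p$, contradicting this transversality. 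Hence $\rho$ is injective, and therefore a bijection, establishing the claimed correspondence.
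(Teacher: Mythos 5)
Your first two steps are exactly the route the paper takes (by reference to Lemma~\ref{corresb2}): the coordinates of the fixed points are sums of $(q^6-1)$-th roots of unity, hence algebraic integers of $K$, and reducing the identity $\mc{G}_q(\alpha)=\alpha$ together with $\mc{G}_q\equiv(x^q,y^q)\pmod{p}$ shows the reduction lands in $\F_q^2$; the count $|\Fix(\mc{G}_q)|=q^2=|\F_q^2|$ then reduces everything to injectivity. The gap is in your injectivity argument. The claim that two distinct characteristic-zero fixed points collapsing modulo $\mf{p}$ would force ``a multiple point'' of the special fibre is an instance of conservation of number, and that principle is only available once you know the zero scheme of $\mc{G}_q(x,y)-(x,y)$ is \emph{finite and flat} over $\Z_{(p)}$. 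For an affine scheme this is not automatic: a priori two generic-fibre points could collapse while some of the $q^2$ reduced points of the special fibre are simply not in the image of specialization, and no contradiction arises from transversality alone. Making your argument work requires establishing flatness (e.g.\ miracle flatness for the complete intersection) and finiteness of the family, none of which you address; as written, the key step is asserted rather than proved.

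There is a more elementary argument, using structure you already invoked in your first paragraph. By Theorem~\ref{fixg2} with $k=q$, every fixed point has the form $\varPhi_{G_2}(\sigma,\tau)$ where $e^{2\pi i\sigma}=\zeta_1$, $e^{2\pi i\tau}=\zeta_2$ and $\zeta_3=(\zeta_1\zeta_2)^{-1}$ are $(q^6-1)$-th roots of unity; up to the linear change of variables $L$, the point is obtained by applying the elementary symmetric functions to $\zeta_i+\zeta_i^{-1}$. Over any field, two such triples yield the same point if and only if they agree up to permutation and inversion, i.e.\ lie in the same Weyl orbit (a monic cubic determines the multiset of its roots, and $t+t^{-1}=s+s^{-1}$ forces $t\in\{s,s^{-1}\}$). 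Since $p\nmid q^6-1$, reduction modulo $\mf{p}$ is injective on the group of $(q^6-1)$-th roots of unity, so distinct Weyl orbits of triples reduce to distinct Weyl orbits, and hence distinct fixed points have distinct reductions. This closes the injectivity step without any appeal to flatness.
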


Similar to the case associated with $B_2\cong C_2$, the correspondence given by 
this lemma is compatible with the action of $\mc{G}_k$. If $\alpha = 
\varPhi_{G_2} (\sigma,\tau)$ is fixed under $\mc{G}_q$, then $\mc{G}_k(\alpha)$ 
is fixed under $\mc{G}_q$ too. Moreover
\[ \overline{\mc{G}_k(\alpha)} = \mc{G}_k(\overline{\alpha}). \]
The characterization of $\F_q^2$, which is compatible with the action of 
$\mc{G}_k$, allows us to obtain the main result of this section.

\begin{theorem}\label{maing2}
 The bivariate polynomial mapping $\mc{G}_k$ associated with $G_2$ induces a 
permutation of $\F_q^2$ if and only if $\gcd(k,q^6-1)=1$.
\end{theorem}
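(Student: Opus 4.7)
The plan is to imitate the structure of the proof of Theorem~\ref{mainb2}, using the correspondence $\Fix(\mc{G}_q) \longleftrightarrow \F_q^2$ from the preceding lemma together with the compatibility $\overline{\mc{G}_k(\alpha)} = \mc{G}_k(\overline{\alpha})$. Because this correspondence intertwines the actions, $\bar{\mc{G}}_k$ permutes $\F_q^2$ if and only if $\mc{G}_k$ permutes the set $\Fix(\mc{G}_q)$, whose structure is given explicitly by Theorem~\ref{fixg2} as $S_1 \cup S_2 \cup S_3 \cup S_4 \cup S_5 \cup S_6$ (with $k$ replaced by $q$). Thus the whole proof reduces to an elementary analysis of when multiplication by $k$ permutes this union.

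For the sufficiency direction, I would use the factorization
\[ q^6 - 1 = (q-1)(q+1)(q^2+q+1)(q^2-q+1), \]
and note that every denominator appearing in the six sets $S_i$ (for $k$ replaced by $q$), namely $q-1,\ q+1,\ q^2-1,\ q^2+q+1,\ q^2-q+1$, divides $q^6-1$. Hence if $\gcd(k,q^6-1)=1$, multiplication by $k$ on $\Q/\Z$ permutes each cyclic group of denominators $q\pm 1$, $q^2-1$, $q^2\pm q + 1$. Since the description of the $S_i$ from Theorem~\ref{fixg2} is in terms of rational $(\sigma,\tau)$ with these denominators, and since $\mc{G}_k$ acts on $\varPhi_{G_2}(\sigma,\tau)$ by sending $(\sigma,\tau)\mapsto(k\sigma,k\tau)$, the map $\mc{G}_k$ restricts to a bijection on each $S_i$ and therefore permutes $\Fix(\mc{G}_q)$.

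For the necessity direction, suppose $\gcd(k,q^6-1)\neq 1$, and fix a prime $\ell$ dividing this gcd. Since $\ell$ divides $q^6-1$, it divides at least one of the four factors $q-1$, $q+1$, $q^2+q+1$, $q^2-q+1$. In each of these cases I would exhibit an explicit witness point in $\Fix(\mc{G}_q)$ that fails to lie in $\mc{G}_k(\Fix(\mc{G}_q))$: namely, for the factor $q^2+q+1$, take $\varPhi_{G_2}(1/(q^2+q+1),\,q/(q^2+q+1))\in S_3$; for $q^2-q+1$, take the analogous point in $S_6$; for $q\pm 1$, use a point in $S_1$ or $S_4$ with primitive denominator; and similarly for $q^2-1$. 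The point is that $\mc{G}_k$ multiplies $(\sigma,\tau)$ by $k$, so its image on these cyclic factors omits any element whose order shares a factor with $k$.

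The main obstacle, and the place where I expect to have to be most careful, is the necessity argument: the sets $S_i$ overlap (for instance $Q_0$ lies in all of them, and the intersections among $S_2, S_3, S_5, S_6$ are nontrivial), so to show a witness point is \emph{not} in the image I must verify that the witness does not accidentally coincide with a $\mc{G}_k$-image of some point from another $S_j$. I would handle this by choosing the witness so that $\sigma$ has order exactly equal to a primitive part of the relevant cyclotomic factor (so that the denominator is not reduced), and then observing that the Weyl-group symmetries listed in the twelve-fold table before Theorem~\ref{fixg2} preserve the denominator of $\sigma$ modulo $\Z$, so no element of $\mc{G}_k(\Fix(\mc{G}_q))$ can have a coordinate of that form. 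This rules out surjectivity and completes the proof.
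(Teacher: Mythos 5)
Your proposal follows essentially the same route as the paper's proof: both reduce the question to whether $\mc{G}_k$ permutes $\Fix(\mc{G}_q)$ via the reduction-mod-$\mf{p}$ correspondence and the compatibility $\overline{\mc{G}_k(\alpha)}=\mc{G}_k(\overline{\alpha})$, obtain sufficiency from the fact that every denominator in Theorem~\ref{fixg2} divides $q^6-1$, and obtain necessity from a witness $\varPhi_{G_2}(1/n,\pm q/n)$ not in the image (your factor list $q\pm1$, $q^2\pm q+1$ is in fact cleaner than the paper's, which includes $q^2+1$, a divisor of $q^4-1$ rather than of $q^6-1$, and your attention to the overlap of the $S_i$ goes beyond what the paper writes down). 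The one repair needed is your final invariance claim: the Weyl symmetries (e.g.\ $(\sigma,\tau)\mapsto(\tau,\sigma)$) do not preserve the denominator of $\sigma$ alone, but they are unimodular integer maps modulo $\Z^2$ and hence preserve the order of $(\sigma,\tau)$ in $(\Q/\Z)^2$; this suffices, since a point of order $q^2+q+1$ cannot equal $k\cdot(\sigma,\tau)$ for any $(\sigma,\tau)$ arising in $\Fix(\mc{G}_q)$ when $\gcd(k,q^2+q+1)>1$, and similarly for the other factors.
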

\begin{proof}
The theorem is easily verified for $k=0$ and $k=1$. Suppose that $k\geq2$ and 
$\gcd(k,q^6-1)=1$. In order to see that $\bar{\mc{G}}_k(x,y)$ is a permutation 
of $\F_q^2$, it is enough to see that $\mc{G}_k$ permute $\Fix(\mc{G}_q)$. By 
Theorem~\ref{fixg2}, the set $\Fix(\mc{G}_q)$ is explicit. Its elements 
$\varPhi_{G_2}(\sigma,\tau)$ come with rational $\sigma$ and $\tau$ whose 
denominators are relatively prime to $k$. Thus the map $\mc{G}_k$ permutes 
$\Fix(\mc{G}_q)$.

To see the converse, suppose that $\gcd(k,q^6-1)\neq 1$. Let $n$ be an integer 
such that $n\in\{ q^2-1,q^2+1,q^2-q+1, q^2+q+1 \}$ and $\gcd(k,n)>1$. Consider 
the element 
\[ \alpha= \varPhi_{G_2}\left( \frac{1}{n},\frac{\pm q}{n} \right) \]
with a suitable choice of sign so that $\alpha\in \Fix(\mc{G}_q)$. The element 
$\alpha$ is in $\Fix(\mc{G}_q)$ but it is not in $\mc{G}_k(\Fix(\mc{G}_q))$. As 
a result, the map $\mc{G}_k$, restricted to $\Fix(\mc{G}_q)$, is not 
surjective. Thus $\bar{\mc{G}}_k(x,y)$ is not a permutation.
\end{proof}

Now, we give a counterexample to the conjecture posed by Lidl and Wells in 
\cite{lidlwells}. The bivariate map $\bar{\mc{G}}_{13}$ is a permutation 
of $\F_p^2$ for an infinite number of primes by Theorem~\ref{maing2}. More 
precisely $\bar{\mc{G}}_{13}: \F_p^2\rightarrow\F_p^2$ is a permutation if and 
only if $p \not\equiv 1,3,4,9,10,12\pmod{13}$. Suppose that $\bar{\mc{G}}_{13}$ 
is a composition of linear polynomial vectors and the generalized Chebyshev 
polynomials $g(2,k,b)$ of Lidl and Wells. Each occurrence of $g(2,k,b)$ will 
put a restriction on $p$, see Theorem~\ref{lidlwellsmain}. However it is not 
possible to obtain the set of primes $p \not\equiv 1,3,4,9,10,12\pmod{13}$
by the conditions $\gcd(k,p^s-1)=1$ for $s=1,2,3$. Thus $\bar{\mc{G}}_{13}$ 
cannot be expressed as a composition of linear polynomial vectors and 
polynomial vectors $g(k,n,b)$ where $k$ and $b$ are various integers.

{\small
\def\refname{References}
\newcommand{\etalchar}[1]{$^{#1}$}

\end{document}